       \font\tenmsb=msbm10
       \font\sevenmsb=msbm7
       \font\fivemsb=msbm5
\let\amstexloaded@\relax\fi
       \def\spaces@{\space\space\space\space\space}
       \def\spaces@@{\spaces@\spaces@\spaces@\spaces@\spaces@}
       \def\space@.  {\futurelet\space@\relax}
       \def\Err@#1{\errhelp\defaulthelp@\errmessage{AmS-teX error: #1}}
       \def\relaxnext@{\let\next\relax}
       \def\accentfam@{7}
       \def\noaccents@{\def\accentfam@{0}}
       \def\Cal{\relaxnext@\ifmmode\let\next\Cal@\else
       \def\next{\Err@{Use \string\Cal\space only in math mode}}\fi\next}
       \def\Cal@#1{{\Cal@@{#1}}}
       \def\Cal@@#1{\noaccents@\fam\tw@#1}
       \def\Bbb{\relaxnext@\ifmmode\let\next\Bbb@\else
       \def\next{\Err@{Use \string\Bbb\space only in math mode}}\fi\next}
       \def\Bbb@#1{{\Bbb@@{#1}}}
       \def\Bbb@@#1{\noaccents@\fam\msbfam#1}
       \def\co{\tiny{\textcircled{\tiny\#}}}
\newtheorem{thm}{Theorem}[section]
\newtheorem{lem}[thm]{Lemma}
\newtheorem{rem}[thm]{Remark}
\newtheorem{iteration lemma}[thm]{iteration Lemma}
\newtheorem{cor}[thm]{Corollary}
\newtheorem*{acknowledgements*}{ACKNOWLEDGEMENtS}
\begin{document}

\setlength{\columnsep}{5pt}
\title{\bf The core and dual core inverses of morphisms with kernels}
\author{Tingting  Li\footnote{ E-mail: littnanjing@163.com},
\ Jianlong Chen\footnote{ Corresponding author. E-mail: jlchen@seu.edu.cn},
\ Mengmeng Zhou\footnote{ E-mail: mmz9209@163.com}\\
School of  Mathematics, Southeast University \\ Nanjing, Jiangsu 210096,  China\\\\
\ Dingguo Wang\footnote{ E-mail: dingguo95@126.com}\\
School of  Mathematical Sciences, Qufu Normal University \\
 Qufu, Shandong 273165,  China}
     \date{}

\maketitle
\begin{quote}
{\textbf{}\small
Let $\mathscr{C}$ be an additive category with an involution $\ast$.
Suppose that
$\varphi : X \rightarrow X$ is a morphism with kernel $\kappa : K \rightarrow X$ in $\mathscr{C}$,
then $\varphi$ is core invertible if and only if $\varphi$ has a cokernel $\lambda: X \rightarrow L$ and both $\kappa\lambda$ and $\varphi^{\ast}\varphi^3+\kappa^{\ast}\kappa$ are invertible.
In this case,
we give the representation of the core inverse of $\varphi$.
We also give the corresponding result about dual core inverse.

\textbf {Keywords:} {\small Core inverse; morphism; kernel; cokernel; invertibility}

\textbf {AMS subject classifications:} {15A09, 18A20, 18A99}
}
\end{quote}

\section{ Introduction }\label{a}
Throughout this paper,
$\mathscr{C}$ is an additive category with an involution $\ast$,
that is to say,
there is a unary operation $\ast$ on the morphisms such that
$\varphi : X \rightarrow Y$ implies $\varphi^{\ast} : Y \rightarrow X$ and that $(\varphi^{\ast})^{\ast}=\varphi,
(\varphi\psi)^{\ast}=\psi^{\ast}\varphi^{\ast}$ for any $\psi : Y \rightarrow Z$ and $(\varphi+\phi)^{\ast}=\varphi^{\ast}+\phi^{\ast}$ for any $\phi : X \rightarrow Y$.
(See for example, \cite[p. 131]{PR2}.)
And $R$ is a $\ast$-ring,
which is an associative ring with 1 and an involution $\ast$.

Let $\varphi : X \rightarrow Y$ be a morphism of $\mathscr{C}$,
we say that $\varphi$ is regular (or $\{1\}$-invertible) if there is a morphism $\chi : Y \rightarrow X$ in $\mathscr{C}$ such that $\varphi\chi\varphi=\varphi$.
In this case,
$\chi$ is said to be an inner inverse of $\varphi$ and is denoted by $\varphi^{-}$.
If such a regular element $\varphi$ also satisfies $\chi\varphi\chi=\chi$,
then we call that $\chi$ is a reflexive inverse of $\varphi$.
When $X=Y$,
if $\chi$ is a reflexive inverse of $\varphi$ and commutes with $\varphi$,
then $\varphi$ is group invertible and such a $\chi$ is called the group inverse of $\varphi$.
The group inverse of $\varphi$ is unique if it exists and is denoted by $\varphi^{\#}$.

Recall that $\varphi$ is Moore-Penrose invertible if there is a morphism $\chi : Y \rightarrow X$ in $\mathscr{C}$ satisfying the following four equations:
\begin{center}
  $(1)$ $\varphi\chi\varphi=\varphi$,~~~$(2)$ $\chi\varphi\chi=\chi$,~~~$(3)$ $(\varphi\chi)^{\ast}=\varphi\chi$,~~~$(4)$ $(\chi\varphi)^{\ast}=\chi\varphi$.
\end{center}
If such a $\chi$ exists,
then it is unique and denoted by $\varphi^{\dagger}$.
Let $\varphi\{i,j,\cdots,l\}$ denote the set of morphisms $\chi$ which satisfy equations $(i),(j),\cdots,(l)$ from among equations $(1)$-$(4)$,
and in this case,
$\chi$ is called the $\{i,j,\cdots,l\}$-inverse of $\varphi$.
If $\chi \in \varphi\{1, 3\}$,
then $\chi$ is called a $\{1, 3\}$-inverse of $\varphi$ and is denoted by $\varphi^{(1, 3)}$.
A $\{1, 4\}$-inverse of $\varphi$ can be similarly defined.
Also,
a regular element and a reflexive invertible element can be called a $\{1\}$-invertible element and a $\{1, 2\}$-invertible element,
respectively.

Baksalary and Trenkler \cite{OM} introduced the core and dual core inverses for a complex matrix.
Then,
Raki\'{c} et al. \cite{DSR} generalized this concept to an arbitrary $\ast$-ring,
and they use five equations to characterize the core inverse.
Later,
Xu et al. \cite{XSZ} proved that these five equations can be dropped to three equations.
In the following,
we rewrite these three equations in the category case.
Let $\varphi : X \rightarrow X$ be a morphism of $\mathscr{C}$,
if there is a morphism $\chi : X \rightarrow X$ satisfying
\begin{equation*}
\begin{split}
  ~(\varphi\chi)^{\ast}=\varphi\chi,~\varphi\chi^2=\chi,~\chi\varphi^2=\varphi,
\end{split}
\end{equation*}
then $\varphi$ is core invertible and $\chi$ is called the core inverse of $\varphi$.
If such $\chi$ exists,
then it is unique and denoted by $\varphi^{\co}$.
And the dual core inverse can be given dually and denoted by $\varphi_{\co}$.

Group inverses and Moore-Penrose inverses of morphisms were investigated some years ago.
(See,\cite{PR2} and \cite{DW}-\cite{PR3}.)
In \cite{DW},
Robinson and Puystjens give the characterizations about the Moore-Penrose inverse and the group inverse of a morphism with kernels.
In \cite{MR},
Miao and Robinson investigate the group and Moore-Penrose inverses of regular morphisms with kernel and cokernel.
Inspired by them,
we consider the core invertibility and dual core invertibility of a morphism with kernels and give their representations.
In the process of proving the above results,
we obtain some characterizations for core inverse and dual core inverse of an element in a $\ast$-ring by the properties of annihilators and units.

The following notations will be used in this paper:
$aR=\{ax~|~x\in R\}$,
$Ra=\{xa~|~x\in R\}$,
$^{\circ}\!a=\{x\in R~|~xa=0\}$,
$a^{\circ}=\{x\in R~|~ax=0\}$,
$R^{\co}=\{a\in R~|~a$ is core invertible$\}$,
$R_{\co}=\{a\in R~|~a$ is dual core invertible$\}$.
Before beginning,
there are some lemmas presenting for the further reference.
It should be pointed out,
the following Lemma~\ref{222} - \ref{core-inverse 2} were put forward in a $\ast$- ring.
It is easy to prove that they are valid in an additive category with an involution $\ast$.
Thus,
we rewrite them in the category case.

\begin{lem} \cite[Theorem $2.6$ and $2.8$]{XSZ}\label{222}
Let $\varphi : X \rightarrow X$ be a morphism of $\mathscr{C}$,
we have the following results:\\
(1) $\varphi$ is core invertible if and only if $\varphi$ is group invertible and $\{1,3\}$-invertible.
In this case,
$\varphi^{\co}=\varphi^{\#}\varphi\varphi^{(1,3)}$.\\
(2) $\varphi$ is dual core invertible if and only if $\varphi$ is group invertible and $\{1,4\}$-invertible.
In this case,
$\varphi_{\co}=\varphi^{(1,4)}\varphi\varphi^{\#}$.
\end{lem}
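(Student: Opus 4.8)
The plan is to prove part (1) directly in both directions and then to deduce part (2) by a formal duality through the involution $\ast$. All computations proceed from the three defining equations of the core inverse, $(\varphi\chi)^{\ast}=\varphi\chi$, $\varphi\chi^2=\chi$, $\chi\varphi^2=\varphi$, together with the defining identities of the group and $\{1,3\}$-inverses.

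For the ``only if'' direction of (1), let $\chi=\varphi^{\co}$. First I would observe that $\chi$ is already a $\{1,3\}$-inverse of $\varphi$: the Hermitian equation is one of the axioms, while substituting $\varphi=\chi\varphi^2$ into the middle factor gives $\varphi\chi\varphi=(\varphi\chi^2)\varphi^2=\chi\varphi^2=\varphi$, so $\chi\in\varphi\{1,3\}$. Next I would produce the group inverse explicitly through the candidate $\varphi^{\#}:=\chi^2\varphi$. One verifies $\varphi(\chi^2\varphi)\varphi=(\varphi\chi^2)\varphi^2=\chi\varphi^2=\varphi$; that $(\chi^2\varphi)\varphi(\chi^2\varphi)=\chi^2(\varphi^2\chi^2)\varphi=\chi^2\varphi\chi\varphi=\chi^2\varphi$, using $\varphi^2\chi^2=\varphi\chi$ and $\varphi\chi\varphi=\varphi$; and that $\varphi\cdot\chi^2\varphi=\chi\varphi=\chi^2\varphi^2=\chi^2\varphi\cdot\varphi$, which gives the commutativity $\varphi\varphi^{\#}=\varphi^{\#}\varphi$. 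Hence $\chi^2\varphi$ is a group inverse and $\varphi$ is both group invertible and $\{1,3\}$-invertible.

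For the ``if'' direction, assume $\varphi^{\#}$ and $\varphi^{(1,3)}$ exist and set $\chi:=\varphi^{\#}\varphi\varphi^{(1,3)}$; I would check the three core axioms in turn. From $\varphi\varphi^{\#}\varphi=\varphi$ one gets $\varphi\chi=\varphi\varphi^{(1,3)}$, which is Hermitian, so $(\varphi\chi)^{\ast}=\varphi\chi$. From $\varphi\varphi^{(1,3)}\varphi=\varphi$ and $\varphi^{\#}\varphi^2=\varphi$ one gets $\chi\varphi^2=\varphi^{\#}(\varphi\varphi^{(1,3)}\varphi)\varphi=\varphi^{\#}\varphi^2=\varphi$. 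The delicate identity is $\varphi\chi^2=\chi$: here I would first use $\varphi\varphi^{\#}\varphi=\varphi$ to reduce $\varphi\chi^2$ to $\varphi\varphi^{(1,3)}\varphi^{\#}\varphi\varphi^{(1,3)}$, then invoke the commutativity $\varphi^{\#}\varphi=\varphi\varphi^{\#}$ to turn this into $\varphi\varphi^{(1,3)}\varphi\cdot\varphi^{\#}\varphi^{(1,3)}$, then collapse $\varphi\varphi^{(1,3)}\varphi=\varphi$ and commute once more to reach $\varphi\varphi^{\#}\varphi^{(1,3)}=\varphi^{\#}\varphi\varphi^{(1,3)}=\chi$. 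By uniqueness of the core inverse this also proves the formula $\varphi^{\co}=\varphi^{\#}\varphi\varphi^{(1,3)}$.

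I expect this last step to be the main obstacle: unlike the other two axioms it is not a single cancellation, because the idempotents $\varphi\varphi^{(1,3)}$ and $\varphi^{\#}\varphi$ need not commute, so the reduction must be routed through the group-inverse commutativity before any factor can be collapsed. Finally, part (2) follows by duality. Applying $\ast$ to the dual-core equations $(\chi\varphi)^{\ast}=\chi\varphi$, $\chi^2\varphi=\chi$, $\varphi^2\chi=\varphi$ shows that $\chi=\varphi_{\co}$ if and only if $\chi^{\ast}=(\varphi^{\ast})^{\co}$; hence $\varphi$ is dual core invertible exactly when $\varphi^{\ast}$ is core invertible. Transporting part (1) through $\ast$, and using $(\varphi^{\ast})^{\#}=(\varphi^{\#})^{\ast}$ together with the fact that $h$ is a $\{1,4\}$-inverse of $\varphi$ iff $h^{\ast}$ is a $\{1,3\}$-inverse of $\varphi^{\ast}$, yields that $\varphi$ is dual core invertible iff it is group invertible and $\{1,4\}$-invertible, with $\varphi_{\co}=((\varphi^{\ast})^{\co})^{\ast}=\varphi^{(1,4)}\varphi\varphi^{\#}$.
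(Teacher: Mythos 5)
Your proof is correct; every equational step checks out (in particular the key chain $\varphi\chi^2=\varphi\varphi^{(1,3)}\varphi^{\#}\varphi\varphi^{(1,3)}=\varphi\varphi^{(1,3)}\varphi\varphi^{\#}\varphi^{(1,3)}=\varphi\varphi^{\#}\varphi^{(1,3)}=\chi$, and the verification that $\chi^2\varphi$ satisfies all three group-inverse equations). Be aware, though, that the paper itself contains no proof of this lemma: it is imported from \cite{XSZ} as a ring-theoretic result, with only the remark that it ``is easy to prove'' that it remains valid in an additive category with an involution. So your argument is not an alternative to the paper's proof; it is the proof the paper leaves implicit, and what it buys is exactly the justification for that remark --- all of your manipulations are sums and composites of endomorphisms of the single object $X$, so they transfer verbatim from the ring case to the categorical one with no appeal to ring-specific machinery. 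Two smaller observations. First, your explicit witness $\varphi^{\#}=(\varphi^{\co})^2\varphi$ in the forward direction is consistent with how the paper later uses the lemma: the same expression appears as the $(1,1)$-entry of $\mathscr{G}^{-1}$ in the bordered-inverse theorem of the final section, where the corresponding entry in the group-inverse version (quoted from \cite{DW}) is $\varphi^{\#}$. Second, your appeal to uniqueness of the core inverse to conclude the formula $\varphi^{\co}=\varphi^{\#}\varphi\varphi^{(1,3)}$ for an \emph{arbitrary} choice of $\varphi^{(1,3)}$ is legitimate here, since the paper's definition already asserts uniqueness; and your derivation of part (2) by transporting part (1) through $\ast$ (using that $\chi$ is a dual core inverse of $\varphi$ iff $\chi^{\ast}$ is a core inverse of $\varphi^{\ast}$, and that $\{1,4\}$-inverses of $\varphi$ correspond to $\{1,3\}$-inverses of $\varphi^{\ast}$) is the clean way to avoid repeating all the computations.
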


\begin{lem} \cite[p. $201$]{Hartwig}\label{000}
Let $\varphi : X \rightarrow Y$ be a morphism of $\mathscr{C}$,
we have the following results:\\
$(1)$ $\varphi$ is $\{1,3\}$-invertible with $\{1,3\}$-inverse $\chi : Y \rightarrow X$ if and only if $\chi^{\ast}\varphi^{\ast}\varphi=\varphi;$\\
$(2)$ $\varphi$ is $\{1,4\}$-invertible with $\{1,4\}$-inverse $\zeta : Y \rightarrow X$ if and only if $\varphi\varphi^{\ast}\zeta^{\ast}=\varphi.$
\end{lem}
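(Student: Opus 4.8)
The plan is to prove part (1) directly from the definition of a $\{1,3\}$-inverse (equations $(1)$ and $(3)$ in the list $(1)$--$(4)$) and then obtain part (2) by the evident $\ast$-dual argument. Throughout I will use only the defining properties of the involution, namely $(\varphi\psi)^\ast=\psi^\ast\varphi^\ast$ and $(\varphi^\ast)^\ast=\varphi$, so no category-specific machinery beyond the ambient additive structure is needed.

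For the forward implication of (1), suppose $\chi\in\varphi\{1,3\}$, so that $\varphi\chi\varphi=\varphi$ and $(\varphi\chi)^\ast=\varphi\chi$. Writing $(\varphi\chi)^\ast=\chi^\ast\varphi^\ast$ and substituting the self-adjointness relation gives $\chi^\ast\varphi^\ast=\varphi\chi$; multiplying on the right by $\varphi$ and using equation $(1)$ yields $\chi^\ast\varphi^\ast\varphi=\varphi\chi\varphi=\varphi$, which is the asserted identity.

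The substance of the argument is the converse, and this is where I expect the only real obstacle: from the single equation $\chi^\ast\varphi^\ast\varphi=\varphi$ I must recover both equations $(1)$ and $(3)$. The key trick is to show that $e:=\varphi\chi$ is a self-adjoint idempotent. First, apply $\ast$ to the hypothesis to get the companion identity $\varphi^\ast\varphi\chi=\varphi^\ast$. Next compute $e^\ast e=\chi^\ast\varphi^\ast\varphi\chi=\varphi\chi=e$, where the middle step uses the hypothesis. Since $e^\ast e$ is automatically self-adjoint, the equality $e^\ast e=e$ forces $e=e^\ast$, i.e.\ $(\varphi\chi)^\ast=\varphi\chi$, which is equation $(3)$. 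Feeding this back, $e=e^\ast=\chi^\ast\varphi^\ast$, so $e\varphi=\chi^\ast\varphi^\ast\varphi=\varphi$ by the hypothesis; that is, $\varphi\chi\varphi=\varphi$, which is equation $(1)$. Hence $\chi$ is a $\{1,3\}$-inverse.

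Part (2) is the mirror image: here one works with $f:=\zeta\varphi$, uses the hypothesis $\varphi\varphi^\ast\zeta^\ast=\varphi$ together with its adjoint $\zeta\varphi\varphi^\ast=\varphi^\ast$, and shows $ff^\ast=f^\ast$, whence $f$ is a self-adjoint idempotent and both equations $(1)$ and $(4)$ follow exactly as before. I therefore expect the whole proof to reduce to these short involution manipulations, with the self-adjoint-idempotent observation being the one genuinely clever step.
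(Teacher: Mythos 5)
Your proof is correct. Note that the paper itself contains no proof of this lemma: it is quoted from Hartwig (p.~201), with only the remark in Section~\ref{a} that the ring-theoretic statements carry over to an additive category with involution. Your argument supplies exactly such a proof, and every step is valid in the categorical setting since it uses only composition and the involution axioms. The forward directions are immediate, and in the converse of (1) the computation $e^{\ast}e=\chi^{\ast}\varphi^{\ast}\varphi\chi=\varphi\chi=e$ with $e=\varphi\chi$, combined with the automatic self-adjointness of $e^{\ast}e$, yields $(\varphi\chi)^{\ast}=\varphi\chi$, and then $\varphi\chi\varphi=e^{\ast}\varphi=\chi^{\ast}\varphi^{\ast}\varphi=\varphi$; part (2) dualizes exactly as you describe. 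Two cosmetic remarks: the companion identity $\varphi^{\ast}\varphi\chi=\varphi^{\ast}$ that you derive at the start of the converse of (1) is never actually used there (the hypothesis alone drives the computation), and an even shorter route to equation (3) is to observe that $\varphi\chi=(\chi^{\ast}\varphi^{\ast}\varphi)\chi=\chi^{\ast}(\varphi^{\ast}\varphi)\chi$ is visibly self-adjoint. These are stylistic points only; the proof as written is complete and fills in precisely what the paper leaves to the reader.
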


\begin{lem}\cite[Theorem $2.10$]{Li}\label{core-inverse 2}
Let $\varphi : X \rightarrow X$ be a morphism of $\mathscr{C}$ and $n\geqslant 2$ a positive integer,
we have the following results:\\
(i) $\varphi$ is core invertible if and only if there exist morphisms $\varepsilon : X \rightarrow X$ and $\tau : X \rightarrow X$ such that $\varphi=\varepsilon(\varphi^{\ast})^n\varphi=\tau\varphi^n$.
In this case,
$\varphi^{\co}=\varphi^{n-1}\varepsilon^{\ast}$.\\
(ii) $\varphi$ is dual core invertible if and only if there exist morphisms $\theta : X \rightarrow X$ and $\rho : X \rightarrow X$ such that $\varphi=\varphi(\varphi^{\ast})^n\theta=\varphi^n\rho$.
In this case,
$\varphi_{\co}=\theta^{\ast}\varphi^{n-1}$.
\end{lem}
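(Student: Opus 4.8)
The plan is to reduce everything to the two characterizations already recorded: Lemma~\ref{222}(1), which says $\varphi$ is core invertible exactly when it is group invertible and $\{1,3\}$-invertible with $\varphi^{\co}=\varphi^{\#}\varphi\varphi^{(1,3)}$, and Lemma~\ref{000}(1), which detects a $\{1,3\}$-inverse through the single identity $\chi^{\ast}\varphi^{\ast}\varphi=\varphi$. Since $\varphi:X\to X$, all the morphisms in sight ($\varphi,\varphi^{\ast},\varepsilon,\tau$ and the prospective inverses) lie in the endomorphism ring $R=\mathrm{End}(X)$, which is a unital $\ast$-ring; so the argument is purely algebraic, and I only prove (i), obtaining (ii) by the evident dualization (Lemma~\ref{222}(2), Lemma~\ref{000}(2), and adjoints throughout).

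For the ``if'' direction I would start from $\varphi=\varepsilon(\varphi^{\ast})^{n}\varphi=\tau\varphi^{n}$. Regrouping the first equation as $\varphi=\bigl(\varepsilon(\varphi^{\ast})^{n-1}\bigr)\varphi^{\ast}\varphi$ and setting $g=\varphi^{n-1}\varepsilon^{\ast}$, one has $g^{\ast}=\varepsilon(\varphi^{\ast})^{n-1}$ and hence $g^{\ast}\varphi^{\ast}\varphi=\varphi$; by Lemma~\ref{000}(1) this exhibits $g$ as a $\{1,3\}$-inverse of $\varphi$. Next I would produce group invertibility from two one-sided divisibilities: $\varphi=\tau\varphi^{n}\in R\varphi^{2}$, while $\varphi=\varphi g\varphi=\varphi^{n}\varepsilon^{\ast}\varphi\in\varphi^{2}R$ (using $n\geq 2$), so that $\varphi\in\varphi^{2}R\cap R\varphi^{2}$, the standard criterion for existence of $\varphi^{\#}$. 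Lemma~\ref{222}(1) then gives core invertibility, and the representation falls out: $\varphi^{\co}=\varphi^{\#}\varphi g=\varphi^{\#}\varphi^{n}\varepsilon^{\ast}=\varphi^{n-1}\varepsilon^{\ast}$, the last step using $\varphi^{\#}\varphi^{n}=\varphi^{n-1}$.

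For the ``only if'' direction, write $c=\varphi^{\co}$ and first record the usual consequences of the three defining equations: $\varphi c\varphi=\varphi$ and $\varphi c=(\varphi c)^{\ast}$ is a Hermitian idempotent, so $\varphi=(\varphi c)\varphi=(\varphi c)^{\ast}\varphi=c^{\ast}\varphi^{\ast}\varphi$. The morphism $\tau$ is immediate from group invertibility (Lemma~\ref{222}): taking $\tau=(\varphi^{\#})^{n-1}$ gives $\tau\varphi^{n}=(\varphi^{\#}\varphi)^{n-1}\varphi=\varphi$ since $\varphi^{\#}\varphi$ is idempotent. The real work is building $\varepsilon$ so that the single $\varphi^{\ast}$ in $\varphi=c^{\ast}\varphi^{\ast}\varphi$ becomes $(\varphi^{\ast})^{n}$. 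Here I would use the defining equation $c=\varphi c^{2}$, whose adjoint is $c^{\ast}=(c^{\ast})^{2}\varphi^{\ast}$; substituting this for the trailing $c^{\ast}$ and inducting yields $\varphi=(c^{\ast})^{n}(\varphi^{\ast})^{n}\varphi$, so $\varepsilon=(c^{\ast})^{n}$ works. As a consistency check, $\varphi^{n-1}\varepsilon^{\ast}=\varphi^{n-1}c^{n}=c=\varphi^{\co}$ by repeated use of $\varphi c^{2}=c$, matching the formula from the ``if'' part.

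The main obstacle is exactly this last construction: the two given data are one-sided and mismatched (one feeds $\{1,3\}$-invertibility, the other feeds the $R\varphi^{2}$ half of group invertibility), and in the forward direction there is no a priori reason the single identity $\varphi=c^{\ast}\varphi^{\ast}\varphi$ should amplify to $n$ copies of $\varphi^{\ast}$. The identity $c=\varphi c^{2}$, equivalently $c^{\ast}=(c^{\ast})^{2}\varphi^{\ast}$ which says $c^{\ast}$ is a left multiple of $\varphi^{\ast}$, is what makes the induction close, and isolating it is the crux. Everything else is bookkeeping with the idempotents $\varphi c$ and $\varphi^{\#}\varphi$ together with the two cited lemmas.
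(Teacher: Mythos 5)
Your proof is correct, but there is nothing in the paper to compare it against: Lemma~\ref{core-inverse 2} is quoted from \cite[Theorem $2.10$]{Li} without proof, the authors remarking only that the ring-theoretic argument carries over to an additive category with involution. Your argument is exactly the natural one built from the paper's own toolkit, and every step checks: in the ``if'' direction, $g=\varphi^{n-1}\varepsilon^{\ast}$ satisfies $g^{\ast}\varphi^{\ast}\varphi=\varepsilon(\varphi^{\ast})^{n}\varphi=\varphi$, so Lemma~\ref{000}(1) gives $\{1,3\}$-invertibility; $\varphi=\tau\varphi^{n}\in R\varphi^{2}$ and $\varphi=\varphi g\varphi=\varphi^{n}\varepsilon^{\ast}\varphi\in\varphi^{2}R$ give $\varphi^{\#}$ by Lemma~\ref{group-inverse} (legitimately applied in $\mathrm{End}(X)$, as you note); and Lemma~\ref{222}(1) plus $\varphi^{\#}\varphi^{n}=\varphi^{n-1}$ yields $\varphi^{\co}=\varphi^{\#}\varphi g=\varphi^{n-1}\varepsilon^{\ast}$, which correctly establishes the representation for an \emph{arbitrary} admissible $\varepsilon$, as the statement requires. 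In the ``only if'' direction, your identification of $c^{\ast}=(c^{\ast})^{2}\varphi^{\ast}$ (the adjoint of $\varphi c^{2}=c$) as the engine that amplifies $\varphi=c^{\ast}\varphi^{\ast}\varphi$ to $\varphi=(c^{\ast})^{n}(\varphi^{\ast})^{n}\varphi$ is precisely the crux, and the induction closes as you claim; the choices $\varepsilon=(c^{\ast})^{n}$, $\tau=(\varphi^{\#})^{n-1}$ work, as would $\tau=c^{\,n-1}$. The one point worth making explicit rather than leaving as a ``usual consequence'' is the derivation $\varphi c\varphi=\varphi c^{2}\varphi^{2}=c\varphi^{2}=\varphi$ from the three defining equations, since it is used both for the $\{1\}$-inverse property and for the idempotency of $\varphi c$; with that line inserted, the proof is complete and self-contained modulo the paper's stated lemmas.
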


\begin{lem} \cite[Proposition $7$]{Hartwig}\label{group-inverse}
Let $a\in R$.
$a\in R^{\#}$ if and only if $a=a^{2}x=ya^{2}$ for some $x, y\in R$.
In this case, $a^{\#}=yax=y^{2}a=ax^{2}$.
\end{lem}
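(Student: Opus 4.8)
The plan is to prove the two implications separately, with essentially all of the work in the converse, which I would organize around a single idempotent assembled from the two given factorizations.

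For the forward direction, suppose $a \in R^{\#}$ with group inverse $a^{\#}$. Then $a = a a^{\#} a$ together with $a a^{\#} = a^{\#} a$ gives $a = a^2 a^{\#} = a^{\#} a^2$, so taking $x = y = a^{\#}$ yields the required $a = a^2 x = y a^2$. This direction is immediate.

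For the converse, assume $a = a^2 x = y a^2$. The first and decisive step is to extract the bridging identity $ya = ax$: multiplying $a = a^2 x$ on the left by $y$ and using $y a^2 = a$ gives $ya = y a^2 x = a x$. I would then set $e := ya = ax$ and record three facts: $ae = a^2 x = a$ and $ea = y a^2 = a$, so $e$ is a two-sided unit for $a$; and $e^2 = (ya)(ax) = y a^2 x = ax = e$, so $e$ is idempotent. With $e$ in hand I would propose $b := yax$ as the group inverse. Since $ax = e$ and $ya = e$, the three candidate expressions coincide, $b = yax = y^2 a = a x^2$, and writing $b = ye = ex$ the axioms reduce to one-line cancellations: $ab = a(ex) = (ae)x = ax = e$ and $ba = (ye)a = y(ea) = ya = e$, hence $ab = ba = e$; then $aba = ea = a$ and $bab = be = y e^2 = ye = b$. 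Thus $b$ is the group inverse of $a$ and the stated formulas for $a^{\#}$ follow at once.

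The only nonroutine move is spotting the identity $ya = ax$ together with the idempotency $e^2 = e$; once $e$ is recognized as a two-sided identity on $a$, every axiom check collapses to a trivial cancellation, so I expect no genuine obstacle beyond this observation. I would also note that the involution plays no role here, so the argument is identical in a plain ring and in the category-theoretic setting used elsewhere in the paper.
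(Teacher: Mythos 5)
Your proof is correct. Note that the paper itself gives no proof of this lemma --- it is quoted directly from Hartwig's \emph{Block generalized inverses} (Proposition 7) --- so there is no internal argument to compare against; your verification is the standard one, cleanly organized. The key identity $ya = y(a^2x) = (ya^2)x = ax$, the resulting idempotent $e = ya = ax$ satisfying $ae = ea = a$, and the candidate $b = yax = ye = ex$ make all three group-inverse axioms ($ab = ba = e$, $aba = a$, $bab = b$) collapse to one-line cancellations exactly as you claim, and the equalities $yax = y^2a = ax^2$ follow from $b = ye = ex$. Your closing remark is also apt: the argument uses only the ring (or additive-category) structure and never the involution, which is consistent with the paper invoking this lemma in both the $\ast$-ring and categorical settings.
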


\section{The Core and Dual Core Inverse of a Morphism with Kernel}\label{b}
In \cite{DW},
Robinson and Puystjens gave the characterizations about the Moore-Penrose inverse and the group inverse of a morphism with kernels,
see the following two lemmas.

\begin{lem}\cite[Theorem $1$]{DW}
Let $\varphi : X \rightarrow Y$ be a morphism in $\mathscr{C}$.
If $\kappa : K \rightarrow X$ is a kernel of $\varphi$,
then $\varphi$ has a Moore-Penrose inverse $\varphi^{\dagger}$ with respect to $\ast$ if and only if
$$\varphi\varphi^{\ast}+\kappa^{\ast}\kappa : X \rightarrow X$$
is invertible.
In this case,
$\kappa$ also has a Moore-Penrose inverse $\kappa^{\dagger}$,
$\kappa\kappa^{\ast}$ is invertible,
$$\kappa^{\dagger}=\kappa^{\ast}(\kappa\kappa^{\ast})^{-1}=(\varphi\varphi^{\ast}+\kappa^{\ast}\kappa)^{-1}\kappa^{\ast},$$
and
$$\varphi^{\dagger}=\varphi^{\ast}(\varphi\varphi^{\ast}+\kappa^{\ast}\kappa)^{-1}.$$
Dually,
if $\lambda : Y \rightarrow L$ is a cokernel of $\varphi$,
then $\varphi$ has a Moore-Penrose inverse $\varphi^{\dagger}$ with respect to $\ast$ if and only if
$$\varphi^{\ast}\varphi+\lambda\lambda^{\ast} : Y \rightarrow Y$$
is invertible.
In this case,
$\lambda$ also has a Moore-Penrose inverse $\lambda^{\dagger}$,
$\lambda^{\ast}\lambda$ is invertible,
$$\lambda^{\dagger}=(\lambda^{\ast}\lambda)^{-1}\lambda^{\ast}=\lambda^{\ast}(\varphi^{\ast}\varphi+\lambda\lambda^{\ast})^{-1},$$
and
$$\varphi^{\dagger}=(\varphi^{\ast}\varphi+\lambda\lambda^{\ast})^{-1}\varphi^{\ast}.$$
\end{lem}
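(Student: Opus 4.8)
The plan is to prove both implications by exhibiting explicit two-sided inverses, using Lemma~\ref{000} to certify one-sided inverses and invoking the universal property of the kernel only where it is genuinely needed. Throughout write $A=\varphi\varphi^{\ast}+\kappa^{\ast}\kappa$ and note that $A=A^{\ast}$ is self-adjoint. Since $\kappa$ is a kernel of $\varphi$ we have $\kappa\varphi=0$, whence $\varphi^{\ast}\kappa^{\ast}=0$, $\kappa^{\ast}\kappa\varphi=0$ and $\varphi^{\ast}\kappa^{\ast}\kappa=0$; these are the only relations needed for the ``if'' part, while the full universal property enters the ``only if'' part.

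For the ``if'' direction, assume $A$ is invertible and put $u=A^{-1}$, so that $u=u^{\ast}$. Set $\chi=\varphi^{\ast}u:Y\to X$, the claimed value of $\varphi^{\dagger}$. Multiplying $uA=1_{X}$ on the right by $\varphi$ and using $\kappa^{\ast}\kappa\varphi=0$ gives $u\varphi\varphi^{\ast}\varphi=\varphi$, that is $\chi^{\ast}\varphi^{\ast}\varphi=\varphi$; by Lemma~\ref{000}(1) this already shows $\chi\in\varphi\{1,3\}$, so equations $(1)$ and $(3)$ hold. Equation $(4)$ is immediate, since $\chi\varphi=\varphi^{\ast}u\varphi$ is self-adjoint because $u=u^{\ast}$. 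The remaining equation $(2)$, namely $\chi\varphi\chi=\chi$, is the crux. Using $\varphi\varphi^{\ast}u=1_{X}-\kappa^{\ast}\kappa u$ one computes $\chi\varphi\chi=\chi-\varphi^{\ast}u\kappa^{\ast}\kappa u$, so $(2)$ reduces to the single identity $\kappa u\varphi=0$ (whose adjoint is $\varphi^{\ast}u\kappa^{\ast}=0$).

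To obtain $\kappa u\varphi=0$ I would pass to the Peirce decomposition determined by the self-adjoint idempotent $p=\varphi\chi$ (self-adjoint and idempotent by $(1)$ and $(3)$) and $f=1_{X}-p$. From $\kappa\varphi=0$ and $(1)$ one checks $\kappa p=0$, $\kappa f=\kappa$, $f\varphi=0$, hence $\varphi\varphi^{\ast}f=f\varphi\varphi^{\ast}=0$ and $f\kappa^{\ast}\kappa f=\kappa^{\ast}\kappa$; thus $A=p\varphi\varphi^{\ast}p+f\kappa^{\ast}\kappa f$ is block-diagonal with respect to $p,f$. A standard Peirce argument (multiply $uA=Au=1_{X}$ by $p$ and $f$ on either side and invert each corner block) then forces $u$ to be block-diagonal, i.e. $puf=fup=0$. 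Consequently $\kappa u\varphi=\kappa fuf\varphi=0$, which establishes $(2)$ and hence $\varphi^{\dagger}=\chi=\varphi^{\ast}A^{-1}$. I expect this block-diagonal step to be the main obstacle, since $(1),(3),(4)$ come essentially for free and all the work is in pinning down $(2)$.

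For the ``only if'' direction, assume $\varphi^{\dagger}$ exists; here the universal property is essential. The morphism $e=1_{X}-\varphi\varphi^{\dagger}$ is a self-adjoint idempotent with $e\varphi=0$, so it factors uniquely as $e=\nu\kappa$ for some $\nu:X\to K$; since $\kappa$ is monic and $\kappa e=\kappa$, we get $\kappa\nu=1_{K}$ and $e\nu=\nu$. Using these relations together with $\varphi^{\dagger}e=0$, $\varphi\varphi^{\ast}f=0$ and their adjoints, one verifies directly that
\begin{equation*}
(\varphi\varphi^{\ast}+\kappa^{\ast}\kappa)\,(\varphi^{\dagger\ast}\varphi^{\dagger}+\nu\nu^{\ast})=\varphi\varphi^{\dagger}+e=1_{X},
\end{equation*}
and symmetrically on the other side, so $A$ is invertible with $A^{-1}=\varphi^{\dagger\ast}\varphi^{\dagger}+\nu\nu^{\ast}$. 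The same data give $\kappa\kappa^{\ast}$ invertible with inverse $\nu^{\ast}\nu$ and $\kappa^{\dagger}=\nu=A^{-1}\kappa^{\ast}=\kappa^{\ast}(\kappa\kappa^{\ast})^{-1}$, together with $\varphi^{\dagger}=\varphi^{\ast}A^{-1}$, which are exactly the asserted formulas. Finally, the dual statement for a cokernel $\lambda$ follows by applying the kernel case to $\varphi^{\ast}$, for which $\lambda^{\ast}$ is a kernel, and noting $(\varphi^{\ast})^{\dagger}=(\varphi^{\dagger})^{\ast}$; equivalently one dualizes the whole argument, interchanging the roles of $\{1,3\}$ with $\{1,4\}$ and of left with right.
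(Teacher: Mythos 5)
The paper never proves this lemma: it is imported verbatim from Robinson--Puystjens \cite[Theorem 1]{DW}, so there is no in-paper argument to compare yours against, and I assess your proof on its own merits. It is correct. In the ``if'' direction your reduction is right: Hartwig's criterion (Lemma~\ref{000}) applied to $u\varphi\varphi^{\ast}\varphi=\varphi$ gives equations $(1)$ and $(3)$, self-adjointness of $u=A^{-1}$ gives $(4)$, and expanding $\chi\varphi\chi$ leaves exactly the obstruction $\varphi^{\ast}u\kappa^{\ast}\kappa u$, which vanishes once $\kappa u\varphi=0$. The Peirce step, which you correctly identify as the crux, also checks out: with $p=\varphi\chi$ and $f=1_X-p$ one has $pA=Ap=pAp=\varphi\varphi^{\ast}$ (using $f\varphi=0$, $\varphi^{\ast}f=0$, $\kappa p=0$), the corner $pAp$ is invertible in $p\,\mathrm{End}(X)\,p$ with inverse $pup$, and multiplying $Au=1_X=uA$ by $p$ and $f$ gives $(pAp)(puf)=0$ and $(fup)(pAp)=0$, hence $puf=fup=0$ and $\kappa u\varphi=(\kappa f)u(p\varphi)=\kappa(fup)\varphi=0$. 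This detour is genuinely necessary, not an over-complication: the tempting shortcut of setting $t=\kappa u\varphi$ and computing $t^{\ast}t=\chi\varphi-(\chi\varphi)^{2}=0$ proves nothing, because the involution on $\mathscr{C}$ is not assumed proper ($t^{\ast}t=0$ need not force $t=0$), and your block-diagonal argument is exactly what circumvents that hypothesis. A pleasant by-product is that your sufficiency proof uses only the relation $\kappa\varphi=0$ and never the universal property of the kernel, so it actually proves the stronger ``annihilator'' version of the statement.

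The ``only if'' direction is also sound: the universal property enters exactly where you invoke it, to factor $e=1_X-\varphi\varphi^{\dagger}$ as $\nu\kappa$ with $\kappa\nu=1_K$, and I verified that $(\varphi\varphi^{\ast}+\kappa^{\ast}\kappa)(\varphi^{\dagger\ast}\varphi^{\dagger}+\nu\nu^{\ast})=\varphi\varphi^{\dagger}+e=1_X$, that $(\kappa\kappa^{\ast})^{-1}=\nu^{\ast}\nu$, and that $\kappa^{\dagger}=\nu=\kappa^{\ast}(\kappa\kappa^{\ast})^{-1}=A^{-1}\kappa^{\ast}$ and $\varphi^{\dagger}=\varphi^{\ast}A^{-1}$ all follow from your relations; the self-adjointness trick $BA=(AB)^{\ast}$ disposes of the other side. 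The reduction of the dual half to the kernel half via the observations that $\lambda^{\ast}$ is a kernel of $\varphi^{\ast}$ and $(\varphi^{\ast})^{\dagger}=(\varphi^{\dagger})^{\ast}$ is standard and fine. One slip of notation: in this direction the identity you call $\varphi\varphi^{\ast}f=0$ should read $\varphi\varphi^{\ast}e=0$ (equivalently $\varphi^{\ast}e=0$), since $f$ was defined only in the other half of the proof; the computations you rely on use the correct relation, so this is cosmetic.
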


\begin{lem}\cite[Corollary $2$]{DW}\label{kernel-group inverse}
Let $\varphi : X \rightarrow X$ be a morphism in $\mathscr{C}$.
If $\kappa : K \rightarrow X$ is a kernel of $\varphi$,
then $\varphi$ has a group inverse if and only if $\varphi$ has a cokernel $\lambda : X \rightarrow L$ and both $\kappa\lambda : K \rightarrow L$ and $\varphi^2+\lambda(\kappa\lambda)^{-1}\kappa : X \rightarrow X$ are invertible.
In this case,
$\gamma=\lambda(\kappa\lambda)^{-1} : X \rightarrow K$ is a cokernel of $\varphi$,
$\varphi\varphi^{\#}+\gamma\kappa=1_X$,
and
$$\varphi^{\#}=\varphi(\varphi^2+\gamma\kappa)^{-1}=(\varphi^2+\gamma\kappa)^{-1}\varphi.$$
\end{lem}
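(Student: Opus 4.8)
The plan is to route both implications through the idempotent $e=1_X-\varphi\varphi^{\#}$ (respectively $e=\gamma\kappa$) attached to a group inverse, reading the required kernel, cokernel and invertibility data off the splitting $1_X=\varphi\varphi^{\#}+e$. I will use throughout that $\kappa$ being a kernel of $\varphi$ means $\kappa\varphi=0$ together with its universal property, that a cokernel $c$ of $\varphi$ means $\varphi c=0$ with the dual universal property, and that kernels are monic; composites are read in the order forced by the typings displayed in the statement.

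For the implication ``$\Rightarrow$'', I would assume $\varphi$ is group invertible and set $e=1_X-\varphi\varphi^{\#}$. From $\varphi\varphi^{\#}=\varphi^{\#}\varphi$ and $\varphi\varphi^{\#}\varphi=\varphi$ it is immediate that $e^2=e$ and $\varphi e=e\varphi=0$. Since $e\varphi=0$, the universal property of $\kappa$ furnishes a unique $\delta:X\to K$ with $e=\delta\kappa$; combining this with $\kappa e=\kappa$ (which holds because $\kappa\varphi=0$) and cancelling the monic $\kappa$ yields $\kappa\delta=1_K$. The key step is then to check that $\delta$ is a cokernel of $\varphi$: for any $g$ with $\varphi g=0$, the splitting together with $\varphi\varphi^{\#}=\varphi^{\#}\varphi$ gives $\varphi\varphi^{\#}g=\varphi^{\#}(\varphi g)=0$, whence $g=eg=\delta(\kappa g)$, and $\kappa\delta=1_K$ makes this factorization unique. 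Choosing $\lambda=\delta$ (so $L=K$) makes $\kappa\lambda=1_K$ invertible and $\gamma=\lambda(\kappa\lambda)^{-1}=\delta$, hence $\gamma\kappa=e$. It remains to show $\varphi^2+\gamma\kappa=\varphi^2+e$ is invertible, for which I would propose the explicit inverse $(\varphi^{\#})^2+e$: expanding the product and using $\varphi e=e\varphi=0$, $e^2=e$ and $\varphi^2(\varphi^{\#})^2=\varphi\varphi^{\#}$ collapses it to $\varphi\varphi^{\#}+e=1_X$. The stated formula then falls out of $\varphi\bigl((\varphi^{\#})^2+e\bigr)=\varphi^{\#}$.

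For the implication ``$\Leftarrow$'', I would assume the cokernel $\lambda$ exists and that $\kappa\lambda$ and $u=\varphi^2+\gamma\kappa$ are invertible, with $\gamma=\lambda(\kappa\lambda)^{-1}$. First I would record the bookkeeping identities $\varphi\gamma=0$ (because $\lambda$, hence $\gamma$, is a cokernel), $\kappa\gamma=\kappa\lambda(\kappa\lambda)^{-1}=1_K$, and consequently that $e=\gamma\kappa$ is idempotent with $\varphi e=(\varphi\gamma)\kappa=0$ and $e\varphi=\gamma(\kappa\varphi)=0$. These annihilations give $\varphi u=u\varphi=\varphi^3$, so that $\varphi=\varphi^3u^{-1}=\varphi^2(\varphi u^{-1})$ and $\varphi=u^{-1}\varphi^3=(u^{-1}\varphi)\varphi^2$. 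Lemma~\ref{group-inverse}, read in $\mathscr{C}$, then yields both that $\varphi$ is group invertible and the representation $\varphi^{\#}=\varphi u^{-1}=u^{-1}\varphi$. The identity $\varphi\varphi^{\#}+\gamma\kappa=1_X$ follows from $1_X=u^{-1}u=u^{-1}(\varphi^2+e)$ after noting $u^{-1}e=e$ (a consequence of $ue=e$) and that $\varphi$ commutes with $u^{-1}$.

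The main obstacle is the forward direction, and precisely the verification that the $\delta$ manufactured from the kernel really satisfies the cokernel universal property: this is the only genuinely categorical (rather than computational) point, and it hinges on having the clean splitting $1_X=\varphi\varphi^{\#}+e$ in hand together with $\kappa\delta=1_K$. Once the annihilation relations $\varphi e=e\varphi=0$ and $\kappa\gamma=1_K$ are secured, both invertibility claims and the formulas for $\varphi^{\#}$ reduce to routine idempotent algebra, and the only real hazard is keeping the direction of each composite straight.
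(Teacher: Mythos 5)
The paper never proves this lemma: it is imported verbatim from Robinson and Puystjens \cite[Corollary 2]{DW}, so there is no internal argument to measure your proposal against. Judged on its own, your proof is correct and self-contained, and it fits the paper's toolkit: your converse direction --- the annihilation relations $\varphi\gamma=0$, $\kappa\gamma=1_K$, the commutation $\varphi u=u\varphi=\varphi^{3}$ for $u=\varphi^{2}+\gamma\kappa$, and the appeal to Lemma~\ref{group-inverse} applied in the endomorphism ring of $X$ --- is exactly the unit-factorization style that this paper itself uses in the proof of Theorem~\ref{kernel} (which, in turn, quotes the present lemma for its own converse). Your forward direction, built on the splitting idempotent $e=1_X-\varphi\varphi^{\#}$, the section $\delta$ with $e=\delta\kappa$ and $\kappa\delta=1_K$, and the explicit inverse $(\varphi^{\#})^{2}+e$ of $\varphi^{2}+e$, is a clean way to manufacture the cokernel whose existence the statement asserts, and all the displayed identities check out under the paper's left-to-right composition convention.

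Two small points should be patched, though neither is a genuine gap. First, in the forward direction you verify the factorization property of $\delta$ but never record that $\varphi\delta=0$, which is part of $\delta$ being a cokernel of $\varphi$; it follows in one line from $\varphi\delta\kappa=\varphi e=0$ together with $\kappa$ being monic. Second, in the converse you assert parenthetically that ``$\lambda$, hence $\gamma$, is a cokernel.'' For the identity $\varphi\gamma=0$ you only need $\varphi\lambda=0$, but ``$\gamma=\lambda(\kappa\lambda)^{-1}$ is a cokernel of $\varphi$'' is itself one of the conclusions of the lemma, so it deserves its one-line justification: a cokernel followed by an isomorphism is again a cokernel, since $\varphi g=0$ gives $g=\lambda h=\gamma\bigl((\kappa\lambda)h\bigr)$, and uniqueness of the factor through $\gamma$ follows from $\kappa\gamma=1_K$.
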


There are some papers characterizing the core and dual core inverse by units.
(See for example,
\cite{Chen} and \cite{Li}.)
Inspired by them and the above two lemmas,
we get characterizations of the core invertibility of a morphism with kernel.

\begin{thm}\label{kernel}
Let $\varphi : X \rightarrow X$ be a morphism in $\mathscr{C}$.
If $\kappa : K \rightarrow X$ is a kernel of $\varphi$,
then $\varphi$ has a core inverse in $\mathscr{C}$ if and only if $\varphi$ has a cokernel $\lambda : X \rightarrow L$ and both $\kappa\lambda : K \rightarrow L$ and $\varphi^{\ast}\varphi^3+\kappa^{\ast}\kappa : X \rightarrow X$ are invertible.
In this case,
$\gamma=\lambda(\kappa\lambda)^{-1} : X \rightarrow K$ is a cokernel of $\varphi$,
$\varphi^{\co}\varphi+\gamma\kappa=1_X$,
and
\begin{equation*}
\begin{split}
     \varphi^{\co}=\varphi^2(\varphi^{\ast}\varphi^3+\kappa^{\ast}\kappa)^{-1}\varphi^{\ast}.
\end{split}
\end{equation*}
\end{thm}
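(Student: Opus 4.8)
The plan is to prove the two implications by different routes, in each case reducing to the ring-type criteria already recorded. Throughout, once a cokernel $\lambda$ with $\kappa\lambda$ invertible is in hand I set $\gamma=\lambda(\kappa\lambda)^{-1}$ and $e=\gamma\kappa$; from $\varphi\lambda=0$ and $\kappa\gamma=1_K$ one reads off $\varphi\gamma=0$, $e^{2}=e$, $\varphi e=e\varphi=0$, and hence $\varphi(1_X-e)=\varphi$. The computational engine is the triple of identities $u\gamma=\kappa^{\ast}$, $\;u(1_X-e)=\varphi^{\ast}\varphi^{3}$ and $u^{\ast}(1_X-e)=(\varphi^{\ast})^{3}\varphi$, each of which holds with no invertibility assumption and follows at once from $\varphi\gamma=0$ and $\kappa\gamma=1_K$ (for instance $u\gamma=\varphi^{\ast}\varphi^{3}\gamma+\kappa^{\ast}\kappa\gamma=\kappa^{\ast}$).

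For sufficiency I would bypass Lemma~\ref{222} and apply Lemma~\ref{core-inverse 2}(i) with $n=3$. Assuming $\kappa\lambda$ and $u$ invertible, inverting the last two identities gives $1_X-e=u^{-1}\varphi^{\ast}\varphi^{3}=(u^{\ast})^{-1}(\varphi^{\ast})^{3}\varphi$. Composing on the left with $\varphi$ and using $\varphi(1_X-e)=\varphi$ produces simultaneously $\varphi=\tau\varphi^{3}=\varepsilon(\varphi^{\ast})^{3}\varphi$ with $\tau=\varphi u^{-1}\varphi^{\ast}$ and $\varepsilon=\varphi(u^{\ast})^{-1}$. Lemma~\ref{core-inverse 2}(i) then yields core invertibility together with $\varphi^{\co}=\varphi^{2}\varepsilon^{\ast}=\varphi^{2}u^{-1}\varphi^{\ast}$, exactly the asserted formula. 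Since core invertibility forces group invertibility (Lemma~\ref{222}), Lemma~\ref{kernel-group inverse} supplies $\varphi\varphi^{\#}+\gamma\kappa=1_X$, and $\varphi^{\co}\varphi=\varphi\varphi^{\#}$ then gives $\varphi^{\co}\varphi+\gamma\kappa=1_X$.

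For necessity I would argue the other way. Core invertibility gives, by Lemma~\ref{222}, group invertibility and a $\{1,3\}$-inverse; Lemma~\ref{kernel-group inverse} then provides a cokernel $\lambda$ with $\kappa\lambda$ and $\varphi^{2}+\gamma\kappa$ invertible and $\varphi\varphi^{\#}+\gamma\kappa=1_X$, and $\varphi^{\co}\varphi=\varphi\varphi^{\#}$ gives the displayed relation. The remaining content is the invertibility of $u$. I would first exhibit a right inverse: with $G=(1_X-e)(\varphi^{\#})^{2}\varphi^{(1,3)}(\varphi^{\#})^{\ast}+\gamma\gamma^{\ast}$, the identities $u(1_X-e)(\varphi^{\#})^{2}=\varphi^{\ast}\varphi^{3}(\varphi^{\#})^{2}=\varphi^{\ast}\varphi$ and $u\gamma=\kappa^{\ast}$ yield $uG=\varphi^{\ast}\bigl(\varphi\varphi^{(1,3)}(\varphi^{\#})^{\ast}\bigr)+\kappa^{\ast}\gamma^{\ast}=\varphi^{\ast}(\varphi^{\co})^{\ast}+\kappa^{\ast}\gamma^{\ast}=(\varphi^{\co}\varphi+\gamma\kappa)^{\ast}=1_X$, where $\varphi\varphi^{(1,3)}(\varphi^{\#})^{\ast}=(\varphi^{\co})^{\ast}$ because $\varphi\varphi^{(1,3)}$ is Hermitian and $\varphi^{\co}=\varphi^{\#}\varphi\varphi^{(1,3)}$. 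I would then show the right annihilator of $u$ is trivial: if $u\beta=0$ then $\kappa\beta=\gamma^{\ast}u\beta=0$, whence $u\beta=\varphi^{\ast}\varphi^{3}\beta=0$; Lemma~\ref{000}(1) in the form $(\varphi^{(1,3)})^{\ast}\varphi^{\ast}\varphi=\varphi$ strips the leading $\varphi^{\ast}$ to give $\varphi^{3}\beta=0$, the group inverse removes the surplus powers to give $\varphi\beta=0$, and the cokernel property with $\kappa\gamma=1_K$ forces $\beta=0$. A right inverse together with a trivial right annihilator gives $Gu=1_X$ as well (from $u(Gu-1_X)=0$), so $u$ is invertible.

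The hard part is precisely this invertibility of $u$ in the necessity direction: both the collapse of $uG$ to $1_X$ and the annihilator argument hinge on coordinating three generalized inverses (group, $\{1,3\}$, and core) with the cokernel data through $\varphi^{\co}\varphi+\gamma\kappa=1_X$, and on the non-Hermitian idempotent $e=\gamma\kappa$ interacting with the involution. The recurring degree mismatch is the real subtlety: every naive manipulation leaves a surplus power of $\varphi$ (one meets $\varphi^{\ast}\varphi^{3}$, $\varphi^{3}\beta$, and the like, never a bare $\varphi$), so the adjoint factor must be removed with the $\{1,3\}$-inverse and the extra powers with the group inverse. It is exactly this phenomenon that makes Lemma~\ref{core-inverse 2}, tailored to trailing powers $\varphi^{n}$, the right tool for sufficiency, whereas a head-on verification of the three defining equations of $\varphi^{\co}$ would stall at the cancellation of a power of $\varphi$.
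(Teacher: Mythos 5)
Your proof is correct, but it is organized around different devices than the paper's. The paper's entire argument pivots on the factorization $\varphi^{\ast}\varphi^{3}+\kappa^{\ast}\kappa=(\varphi^{\ast}\varphi+\kappa^{\ast}\kappa)(\varphi^{2}+\gamma\kappa)$, which you never use. For sufficiency the paper extracts cancellation identities from that factorization and then verifies the three defining equations head-on for $\chi=\varphi^{2}(\varphi^{2}+\gamma\kappa)^{-1}(\varphi^{\ast}\varphi+\kappa^{\ast}\kappa)^{-1}\varphi^{\ast}$; you instead feed $\varphi=\tau\varphi^{3}=\varepsilon(\varphi^{\ast})^{3}\varphi$ into Lemma~\ref{core-inverse 2}(i) with $n=3$, which is shorter and yields $\varphi^{\co}=\varphi^{2}u^{-1}\varphi^{\ast}$ for free --- so your closing claim that a head-on verification ``would stall'' is belied by the paper, since the factorization supplies exactly the cancellations needed. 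For necessity the paper again uses the factorization: $\varphi^{2}+\gamma\kappa$ is invertible by Lemma~\ref{kernel-group inverse}, and $\varphi^{\ast}\varphi+\kappa^{\ast}\kappa$ is invertible because $\varphi^{\co}(\varphi^{\co})^{\ast}+\gamma\gamma^{\ast}$ is an explicit one-sided (hence, by symmetry, two-sided) inverse of it, so $u$ is a product of two invertibles; your route --- the right inverse $G=(1_X-e)(\varphi^{\#})^{2}\varphi^{(1,3)}(\varphi^{\#})^{\ast}+\gamma\gamma^{\ast}$ together with the epicness argument $u\beta=0\Rightarrow\beta=0$ to upgrade $uG=1_X$ to invertibility --- is valid (the identities $u\gamma=\kappa^{\ast}$, $\gamma^{\ast}u=\kappa$, $u(1_X-e)=\varphi^{\ast}\varphi^{3}$, $\varphi^{3}(\varphi^{\#})^{2}=\varphi$ and $\varphi\varphi^{(1,3)}(\varphi^{\#})^{\ast}=(\varphi^{\co})^{\ast}$ all check out), but it is noticeably heavier than the paper's two easy invertibility checks, and it needs three generalized inverses where the paper needs only $\varphi^{\co}$ and $\varphi^{\#}$. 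One gloss you share with the paper: the relation $\varphi^{\co}\varphi+\gamma\kappa=1_X$ is obtained from Lemma~\ref{kernel-group inverse} for the cokernel that lemma supplies, and identifying that $\gamma$ with the one built from an arbitrary given $\lambda$ requires the (easy) observation that $\gamma=\lambda(\kappa\lambda)^{-1}$ does not depend on the choice of cokernel, because any two cokernels differ by an isomorphism that cancels in $\lambda(\kappa\lambda)^{-1}$.
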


\begin{proof}
Let $\lambda : X \rightarrow L$ be a cokernel of $\varphi$ with both $\kappa\lambda$ and $\varphi^{\ast}\varphi^3+\kappa^{\ast}\kappa$ invertible,
and set $\gamma=\lambda(\kappa\lambda)^{-1}$.
Since  $\kappa\varphi=0=\varphi\lambda$ and $\varphi^{\ast}\kappa^{\ast}=0=\lambda^{\ast}\varphi^{\ast}$,
then
\begin{equation*}
\begin{split}
     \varphi^{\ast}\varphi^3+\kappa^{\ast}\kappa=(\varphi^{\ast}\varphi+\kappa^{\ast}\kappa)(\varphi^2+\gamma\kappa),
\end{split}
\end{equation*}
because $\varphi^{\ast}\varphi+\kappa^{\ast}\kappa$ is symmetric,
both $\varphi^{\ast}\varphi+\kappa^{\ast}\kappa : X \rightarrow X$ and $\varphi^2+\gamma\kappa : X \rightarrow X$ are invertible.
In addition,
\begin{equation*}
\begin{split}
     (\varphi^{\ast}\varphi+\kappa^{\ast}\kappa)\varphi=\varphi^{\ast}\varphi^2,
\end{split}
\end{equation*}
and for $s\geq 0$ an integer,
\begin{equation*}
\begin{split}
     \varphi^{1+s}(\varphi^2+\gamma\kappa)=\varphi^{3+s}=(\varphi^2+\gamma\kappa)\varphi^{1+s}.
\end{split}
\end{equation*}
Consequently,
\begin{equation}\label{D5}
\begin{split}
     \varphi=(\varphi^{\ast}\varphi+\kappa^{\ast}\kappa)^{-1}\varphi^{\ast}\varphi^2,
\end{split}
\end{equation}
\begin{equation}\label{D6}
\begin{split}
     \varphi^{1+s}=\varphi^{3+s}(\varphi^2+\gamma\kappa)^{-1}=(\varphi^2+\gamma\kappa)^{-1}\varphi^{3+s},
\end{split}
\end{equation}
\begin{equation}\label{D7}
\begin{split}
     \varphi^{1+s}(\varphi^2+\gamma\kappa)^{-1}=(\varphi^2+\gamma\kappa)^{-1}\varphi^{1+s}.
\end{split}
\end{equation}
Let $\chi=\varphi^2(\varphi^{\ast}\varphi^3+\kappa^{\ast}\kappa)^{-1}\varphi^{\ast}=\varphi^2(\varphi^2+\gamma\kappa)^{-1}(\varphi^{\ast}\varphi+\kappa^{\ast}\kappa)^{-1}\varphi^{\ast}$,
we now show that $\chi$ is the core inverse of $\varphi$.
Since
\begin{equation*}
\begin{split}
     \varphi\chi
     &~= \varphi\varphi^2(\varphi^2+\gamma\kappa)^{-1}(\varphi^{\ast}\varphi+\kappa^{\ast}\kappa)^{-1}\varphi^{\ast}\\
     &~= [\varphi^3(\varphi^2+\gamma\kappa)^{-1}](\varphi^{\ast}\varphi+\kappa^{\ast}\kappa)^{-1}\varphi^{\ast}\\
     &~\stackrel{(\ref{D6})}{=} \varphi(\varphi^{\ast}\varphi+\kappa^{\ast}\kappa)^{-1}\varphi^{\ast},
\end{split}
\end{equation*}
thus $(\varphi\chi)^{\ast}=\varphi\chi$.
In addition,
\begin{equation*}
\begin{split}
     \chi\varphi^2
     &~= \varphi^2(\varphi^2+\gamma\kappa)^{-1}(\varphi^{\ast}\varphi+\kappa^{\ast}\kappa)^{-1}\varphi^{\ast}\varphi^2 ~\stackrel{(\ref{D5})}{=} \varphi^2(\varphi^2+\gamma\kappa)^{-1}\varphi\\
     &~\stackrel{(\ref{D7})}{=} \varphi^2\varphi(\varphi^2+\gamma\kappa)^{-1} ~\stackrel{(\ref{D6})}{=} \varphi,
\end{split}
\end{equation*}
and
\begin{equation*}
\begin{split}
     \varphi\chi^2
     &~= \varphi\varphi^2(\varphi^2+\gamma\kappa)^{-1}(\varphi^{\ast}\varphi+\kappa^{\ast}\kappa)^{-1}\varphi^{\ast}\varphi^2(\varphi^2+\gamma\kappa)^{-1}(\varphi^{\ast}\varphi+\kappa^{\ast}\kappa)^{-1}\varphi^{\ast}\\
     &~= [\varphi^3(\varphi^2+\gamma\kappa)^{-1}][(\varphi^{\ast}\varphi+\kappa^{\ast}\kappa)^{-1}\varphi^{\ast}\varphi^2](\varphi^2+\gamma\kappa)^{-1}(\varphi^{\ast}\varphi+\kappa^{\ast}\kappa)^{-1}\varphi^{\ast}\\
     &~\stackrel{(\ref{D5})(\ref{D6})}{=} \varphi\varphi(\varphi^2+\gamma\kappa)^{-1}(\varphi^{\ast}\varphi+\kappa^{\ast}\kappa)^{-1}\varphi^{\ast} = \chi.
\end{split}
\end{equation*}
Therefore,
$\varphi$ is core invertible with core inverse $\varphi^{\co}=\varphi^2(\varphi^{\ast}\varphi^3+\kappa^{\ast}\kappa)^{-1}\varphi^{\ast}$.

Conversely,
suppose that $\varphi$ has a core inverse $\varphi^{\co}$,
then $\varphi$ is group invertible and $\varphi^{\#}\varphi=\varphi^{\co}\varphi$ by Lemma~\ref{222}.
Therefore,
by applying Lemma~\ref{kernel-group inverse},
$\varphi$ has a cokernel $\lambda : X \rightarrow L$,
both $\kappa\lambda : K \rightarrow L$ and $\varphi^2+\lambda(\kappa\lambda)^{-1}\kappa : X \rightarrow X$ are invertible and $1_X=\varphi\varphi^{\#}+\gamma\kappa=\varphi^{\#}\varphi+\gamma\kappa=\varphi^{\co}\varphi+\gamma\kappa$,
where $\gamma=\lambda(\kappa\lambda)^{-1} : X \rightarrow K$ is a cokernel of $\varphi$.
In addition,
since $\kappa\varphi=0=\varphi\lambda$,
thus $\kappa\varphi^{\co}=\kappa\varphi\varphi^{\co}\varphi^{\co}=0$,
$\varphi\gamma=0$ and $\kappa\gamma=1_K$,
furthermore,
\begin{equation*}
\begin{split}
     &~ ~~~(\varphi^{\co}(\varphi^{\co})^{\ast}+\gamma\gamma^{\ast})(\varphi^{\ast}\varphi+\kappa^{\ast}\kappa)\\
     &~= \varphi^{\co}(\varphi^{\co})^{\ast}\varphi^{\ast}\varphi+\varphi^{\co}(\varphi^{\co})^{\ast}\kappa^{\ast}\kappa+\gamma\gamma^{\ast}\varphi^{\ast}\varphi+\gamma\gamma^{\ast}\kappa^{\ast}\kappa\\
     &~= \varphi^{\co}(\varphi\varphi^{\co})^{\ast}\varphi+\varphi^{\co}(\kappa\varphi^{\co})^{\ast}\kappa+\gamma(\varphi\gamma)^{\ast}\varphi+\gamma(\kappa\gamma)^{\ast}\kappa\\
     &~= \varphi^{\co}\varphi+\gamma\kappa=1_X.
\end{split}
\end{equation*}
Since $\varphi^{\ast}\varphi+\kappa^{\ast}\kappa$ is symmetric,
it follows that $\varphi^{\ast}\varphi+\kappa^{\ast}\kappa$ is invertible with inverse $\varphi^{\co}(\varphi^{\co})^{\ast}+\gamma\gamma^{\ast}$.
Consequently,
$\varphi^{\ast}\varphi^3+\kappa^{\ast}\kappa=(\varphi^{\ast}\varphi+\kappa^{\ast}\kappa)(\varphi^2+\lambda(\kappa\lambda)^{-1}\kappa)$ is invertible.
\end{proof}

Dually,
we obtain the following result.

\begin{thm}\label{cokernel}
Let $\varphi : X \rightarrow X$ be a morphism of an additive category $\mathscr{C}$.
If $\lambda : X \rightarrow L$ is a cokernel of $\varphi$,
then $\varphi$ has a dual core inverse in $\mathscr{C}$ if and only if $\varphi$ has a kernel $\kappa : K \rightarrow X$ and both $\kappa\lambda : K \rightarrow L$ and $\varphi^3\varphi^{\ast}+\lambda\lambda^{\ast} : X \rightarrow X$ are invertible.
In this case,
$\delta=(\kappa\lambda)^{-1}\kappa : L \rightarrow X$ is a kernel of $\varphi$,
$\varphi_{\co}\varphi+\delta\kappa=1_X$,
and
\begin{equation*}
\begin{split}
     \varphi_{\co}=\varphi^{\ast}(\varphi^3\varphi^{\ast}+\lambda\lambda^{\ast})^{-1}\varphi^2.
\end{split}
\end{equation*}
\end{thm}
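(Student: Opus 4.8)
The statement is the exact formal dual of Theorem~\ref{kernel}, obtained by reversing the order of composition (equivalently, by passing to the opposite category, which interchanges kernels with cokernels and the core inverse with the dual core inverse). Accordingly, my plan is to mirror the proof of Theorem~\ref{kernel} line by line. First I would set $\delta=(\kappa\lambda)^{-1}\kappa$ and record the identities dual to $\varphi\gamma=0,\ \kappa\gamma=1_K$: from $\kappa\varphi=0=\varphi\lambda$ one gets $\delta\varphi=0$ and $\delta\lambda=1_L$, so $\delta$ is a kernel of $\varphi$ (being $\kappa$ precomposed with the isomorphism $(\kappa\lambda)^{-1}$) and, since $\delta\lambda=1_L$, the morphism $\lambda\delta:X\to X$ is idempotent.

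For the ``if'' direction I would start from the factorization
\[
\varphi^3\varphi^\ast+\lambda\lambda^\ast=(\varphi^2+\lambda\delta)(\varphi\varphi^\ast+\lambda\lambda^\ast),
\]
which follows from $\varphi\lambda=0$, $\delta\varphi=0$ and $\delta\lambda=1_L$. Since $\varphi\varphi^\ast+\lambda\lambda^\ast$ is symmetric and is a factor of an invertible morphism, the argument used in Theorem~\ref{kernel} (a symmetric morphism possessing a one-sided inverse is invertible) shows that both $\varphi\varphi^\ast+\lambda\lambda^\ast$ and $\varphi^2+\lambda\delta$ are invertible. I would then derive the dual analogues of (\ref{D5})--(\ref{D7}), namely $\varphi=\varphi^2\varphi^\ast(\varphi\varphi^\ast+\lambda\lambda^\ast)^{-1}$ and, for $s\geq 0$, $\varphi^{1+s}=(\varphi^2+\lambda\delta)^{-1}\varphi^{3+s}=\varphi^{3+s}(\varphi^2+\lambda\delta)^{-1}$ together with the commutation $\varphi^{1+s}(\varphi^2+\lambda\delta)^{-1}=(\varphi^2+\lambda\delta)^{-1}\varphi^{1+s}$. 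Writing $\chi=\varphi^\ast(\varphi\varphi^\ast+\lambda\lambda^\ast)^{-1}(\varphi^2+\lambda\delta)^{-1}\varphi^2=\varphi^\ast(\varphi^3\varphi^\ast+\lambda\lambda^\ast)^{-1}\varphi^2$, a direct substitution using these identities gives $\chi\varphi=\varphi^\ast(\varphi\varphi^\ast+\lambda\lambda^\ast)^{-1}\varphi$ (hence $(\chi\varphi)^\ast=\chi\varphi$), $\varphi^2\chi=\varphi$ and $\chi^2\varphi=\chi$, so $\chi=\varphi_{\co}$.

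For the converse, suppose $\varphi_{\co}$ exists. By Lemma~\ref{222}(2), $\varphi$ is group invertible and $\varphi\varphi_{\co}=\varphi\varphi^\#$ is idempotent, so the dual form of Lemma~\ref{kernel-group inverse} (with the roles of kernel and cokernel interchanged) furnishes a kernel $\kappa$ with $\kappa\lambda$ and $\varphi^2+\lambda\delta$ invertible and with $\varphi\varphi_{\co}+\lambda\delta=1_X$. Using $\delta\varphi=0$, $\delta\lambda=1_L$, $\varphi_{\co}\lambda=0$ and the symmetry of $\varphi_{\co}\varphi$, I would verify
\[
(\varphi\varphi^\ast+\lambda\lambda^\ast)\big((\varphi_{\co})^\ast\varphi_{\co}+\delta^\ast\delta\big)=\varphi\varphi_{\co}\varphi\varphi_{\co}+\lambda\delta=\varphi\varphi_{\co}+\lambda\delta=1_X,
\]
the last simplification using idempotency of $\varphi\varphi_{\co}$. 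Being symmetric with a one-sided inverse, $\varphi\varphi^\ast+\lambda\lambda^\ast$ is then invertible, and the factorization above makes $\varphi^3\varphi^\ast+\lambda\lambda^\ast$ invertible.

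The only genuinely new bookkeeping, and the main place to be careful, is the consistent reversal of composition orders throughout the dualization---in particular making sure that each product (such as $\lambda\delta$, $\delta\lambda$, and the two factorizations) is typed correctly---and checking that the version of Lemma~\ref{kernel-group inverse} with a prescribed cokernel, rather than a prescribed kernel, is the one legitimately applied in the converse.
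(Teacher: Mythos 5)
Your proposal is correct and takes exactly the paper's approach: the paper's entire proof of Theorem~\ref{cokernel} is the single word ``Dually,'' and the line-by-line dualization of Theorem~\ref{kernel} that you carry out (reversed factorization $(\varphi^2+\lambda\delta)(\varphi\varphi^{\ast}+\lambda\lambda^{\ast})$, dual identities, dual of Lemma~\ref{kernel-group inverse} for the converse) is precisely what that word stands for. Note that in doing so you have also, correctly, repaired a typo in the statement itself: the composite $\delta\kappa$ is undefined (as $\delta : L \rightarrow X$ and $\kappa : K \rightarrow X$), and the identity you prove, $\varphi\varphi_{\co}+\lambda\delta=1_X$, is the genuine dual of $\varphi^{\co}\varphi+\gamma\kappa=1_X$ from Theorem~\ref{kernel}.
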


\begin{rem}
In fact,
one can easily find that Theorem~\ref{kernel} is true when we raise the $3$ power to $n$ power,
that is to say,
change $\varphi^{\ast}\varphi^3+\kappa^{\ast}\kappa$ to $\varphi^{\ast}\varphi^n+\kappa^{\ast}\kappa$,
where $n\geqslant 3$.
And in this case,
$\varphi^{\co}=\varphi^{n-1}(\varphi^{\ast}\varphi^n+\kappa^{\ast}\kappa)^{-1}\varphi^{\ast}$.
Similarly,
it is valid for dual core inverse.
\end{rem}

Consider Theorem~\ref{kernel} in the ring case,
we obtain the following result.

\begin{thm}
Let $a\in R$ and $n\geqslant 3$ a positive integer.
Then $a\in R^{\co}$ if and only if there exists $b\in R$ such that $^{\circ}\!a=Rb$ and  $u=a^{\ast}a^n+b^{\ast}b$ is invertible.
In this case,
$$a^{\co}=a^{n-1}u^{-1}a^{\ast}.$$
\end{thm}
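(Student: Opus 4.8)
My plan is to read the statement as the ring-theoretic shadow of Theorem~\ref{kernel}, under the dictionary $\varphi\leftrightarrow a$, $\kappa\leftrightarrow b$: the hypothesis $^{\circ}\!a=Rb$ encodes ``$\kappa$ is a kernel of $\varphi$'' (the inclusion $b\in{}^{\circ}\!a$ says $ba=0$, the analogue of $\kappa\varphi=0$, while $^{\circ}\!a\subseteq Rb$ is the universal property), and $u=a^{\ast}a^{n}+b^{\ast}b$ corresponds to $\varphi^{\ast}\varphi^{n}+\kappa^{\ast}\kappa$. I would prove the two implications directly in $R$, using Lemma~\ref{core-inverse 2} to read off the representation. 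Note first that $ba=0$ gives $a^{\ast}b^{\ast}=(ba)^{\ast}=0$, hence $(a^{\ast})^{k}b^{\ast}=0$ for $k\geq1$, and that $u^{\ast}=(a^{\ast})^{n}a+b^{\ast}b$.

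For necessity, assume $a\in R^{\co}$. By Lemma~\ref{222}, $a\in R^{\#}$, so $e:=aa^{\#}=a^{\#}a$ is an idempotent with $ea=ae=a$ and $a^{\co}a=e$; put $b:=1-e$. Then $ba=0$, and $xa=0$ forces $xe=xaa^{\#}=0$, so $x=x(1-e)=xb\in Rb$; hence $^{\circ}\!a=Rb$. For the invertibility of $u$ I would reproduce the computation in the proof of Theorem~\ref{kernel} with $\gamma:=b$: one checks $ab=0$, $ba^{\co}=0$ and $a^{\co}a+b=1$, whence the symmetric element $v:=a^{\ast}a+b^{\ast}b$ is invertible with inverse $a^{\co}(a^{\co})^{\ast}+bb^{\ast}$; and since $ba^{k}=0$ and $b^{2}=b$, one factors $u=v\,(a^{n-1}+b)$. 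As $a^{n-1}+b$ is a unit with inverse $(a^{\#})^{n-1}+b$, $u$ is a product of units, and the formula $a^{\co}=a^{n-1}u^{-1}a^{\ast}$ drops out.

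For sufficiency, assume $^{\circ}\!a=Rb$ and $u$ invertible; I would verify the two hypotheses of Lemma~\ref{core-inverse 2}(i), namely $a=\tau a^{n}$ and $a=\varepsilon(a^{\ast})^{n}a$. The first is immediate: $u\in Ra^{n}+Rb$ and $Ru=R$ give $Ra^{n}+Rb=R$, so $1=\alpha a^{n}+\beta b$, and right-multiplying by $a$ (using $ba=0$) gives $a=\alpha a^{n+1}=(\alpha a)a^{n}$. For the second, the natural candidate is $\varepsilon=a(u^{\ast})^{-1}$, since then $\varepsilon^{\ast}=u^{-1}a^{\ast}$ and Lemma~\ref{core-inverse 2} returns exactly $a^{\co}=a^{n-1}\varepsilon^{\ast}=a^{n-1}u^{-1}a^{\ast}$. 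Because $(u^{\ast})^{-1}(a^{\ast})^{n}a=(u^{\ast})^{-1}(u^{\ast}-b^{\ast}b)=1-(u^{\ast})^{-1}b^{\ast}b$, the identity $a=\varepsilon(a^{\ast})^{n}a$ is equivalent to the single equation $a(u^{\ast})^{-1}b^{\ast}b=0$.

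This last equation is the heart of the matter and the step I expect to be the main obstacle. The one-sided hypothesis $^{\circ}\!a=Rb$ (with $a^{\ast}b^{\ast}=0$) readily yields only ``left-handed'' information: for instance $Ru^{\ast}=R$ gives $R(a^{\ast})^{n}a+Rb=R$, so $1=p(a^{\ast})^{n}a+qb$, and right-multiplying by $a$ produces $a=p(a^{\ast})^{n}a^{2}\in R(a^{\ast})^{n}a^{2}$ — one too many factors of $a$, which cannot be cancelled by ideal-theoretic manipulations alone. Equivalently, the data easily force $a\in Ra^{n}$ and $a\in Ra^{2}$, whereas core invertibility also forces the ``right-handed'' content $a\in a^{2}R$, and this must be squeezed out of the genuinely two-sided invertibility of $u$ rather than from $Ru=R$. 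Concretely I would try to show that $p:=(u^{\ast})^{-1}b^{\ast}b$ is the idempotent projecting onto $^{\circ}\!a$, so that $ap=0$; the equivalent, and more geometric, target is to prove that $v=a^{\ast}a+b^{\ast}b$ is invertible, for once this is known the relations $va^{k}=a^{\ast}a^{k+1}$ and the factorization $u=v(a^{n-1}+\gamma b)$ reduce the verification of the three defining identities of $a^{n-1}u^{-1}a^{\ast}$ to the morphism computation of Theorem~\ref{kernel}. Producing that ``cokernel'' $\gamma$ — equivalently, the group inverse of $a$ — from $^{\circ}\!a=Rb$ and the invertibility of $u$ is precisely where the real work lies.
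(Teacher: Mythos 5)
Your proposal has genuine gaps in both directions; in the necessity direction the argument is not merely incomplete but actually wrong. You take $b=1-a^{\#}a=1-a^{\co}a$, whereas the paper takes $b=1-aa^{\co}$. These are different idempotents: the paper's is symmetric ($b^{\ast}=b=b^{2}$), yours is not, and this is fatal. Your key claim that $v=a^{\ast}a+b^{\ast}b$ has inverse $a^{\co}(a^{\co})^{\ast}+bb^{\ast}$ breaks at the last term of the expansion: using $ab=0$, $ba^{\co}=0$ and $(aa^{\co})^{\ast}=aa^{\co}$ one gets
$\bigl(a^{\co}(a^{\co})^{\ast}+bb^{\ast}\bigr)\bigl(a^{\ast}a+b^{\ast}b\bigr)=a^{\co}a+bb^{\ast}b$,
and $bb^{\ast}b=b$ holds when $b$ is symmetric (then $bb^{\ast}b=b^{3}=b$) but fails for your $b$ in general. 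Worse, the conclusion itself fails for your choice: in $R=M_{2}(\mathbb{Z})$ with transpose as involution, $a=\left(\begin{smallmatrix}0&0\\-1&1\end{smallmatrix}\right)$ is core invertible with $a^{\co}=\left(\begin{smallmatrix}0&0\\0&1\end{smallmatrix}\right)$, your $b=1-a=\left(\begin{smallmatrix}1&0\\1&0\end{smallmatrix}\right)$ does satisfy $^{\circ}\!a=Rb$, but $u=a^{\ast}a^{n}+b^{\ast}b=\left(\begin{smallmatrix}3&-1\\-1&1\end{smallmatrix}\right)$ has determinant $2$ and is not invertible, for every $n\geqslant 3$. With the paper's $b=1-aa^{\co}=\left(\begin{smallmatrix}1&0\\0&0\end{smallmatrix}\right)$ one gets $u=\left(\begin{smallmatrix}2&-1\\-1&1\end{smallmatrix}\right)$, a unit; in general the paper factors $u=(a^{\ast}+1-aa^{\co})(a^{n}+1-aa^{\co})$ as a product of two explicit units. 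Incidentally, your dictionary with Theorem~\ref{kernel} conflates $\kappa$ and $\gamma$: the categorical computation uses $\gamma(\kappa\gamma)^{\ast}\kappa=\gamma\kappa$ via $\kappa\gamma=1_{K}$, which, when both are replaced by $b$, would require $b^{2}=1$ rather than $b^{2}=b$.

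In the sufficiency direction you correctly reduce the problem to showing $a(u^{\ast})^{-1}b^{\ast}b=0$, equivalently to producing the group inverse of $a$, but you stop there, so the proof is incomplete by your own admission. The missing idea in the paper is a one-line symmetry trick, not a construction of a cokernel: since $ba^{n-1}=0$, the element $(a^{\ast})^{n-1}u=(a^{\ast})^{n}a^{n}$ is symmetric, hence $(a^{\ast})^{n-1}u=u^{\ast}a^{n-1}$, i.e. $(u^{\ast})^{-1}(a^{\ast})^{n-1}=a^{n-1}u^{-1}$. This converts exactly the ``left-handed'' relation you derived, $a=(u^{\ast})^{-1}(a^{\ast})^{n}a^{2}$, into $a=a^{n-1}u^{-1}a^{\ast}a^{2}\in a^{2}R$ (this is where $n\geqslant 3$ is used, so that $n-1\geqslant 2$), giving $a\in a^{2}R\cap Ra^{2}$ and thus group invertibility by Lemma~\ref{group-inverse}. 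From there the paper writes $1-a^{\#}a=yb$, deduces $ayb=0$ and $byb=b$, factors $u=(a^{\ast}a+b^{\ast}b)(a^{n-1}+yb)$, concludes both factors are invertible (the first being symmetric), and verifies directly that $a^{n-1}u^{-1}a^{\ast}$ satisfies the three defining equations of the core inverse.
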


\begin{proof}
Suppose that $a$ is core invertible with core inverse $a^{\co}$.
Let $b=1-aa^{\co}$,
then $b^{\ast}=b=b^2$ and $Rb=R(1-aa^{\co})=$ $^{\circ}\!(aa^{\co})$.
Obviously $^{\circ}\!a\subseteq$ $^{\circ}\!(aa^{\co})$;
if $xaa^{\co}=0$,
then $xa=xaa^{\co}a=0$,
hence $^{\circ}\!(aa^{\co})\subseteq$ $^{\circ}\!a$.
Therefore,
$^{\circ}\!a=$ $^{\circ}\!(aa^{\co})=Rb$.
In addition,
\begin{equation*}
\begin{split}
    u=a^{\ast}a^n+b^{\ast}b=a^{\ast}a^n+1-aa^{\co}=(a^{\ast}+1-aa^{\co})(a^n+1-aa^{\co}).
\end{split}
\end{equation*}
It is easy to verify that $(a^{\co}+1-a^{\co}a)^{\ast}$ and $(a^{\co})^n+1-a^{\co}a$ are inverses of $a^{\ast}+1-aa^{\co}$ and $a^n+1-aa^{\co}$,
respectively.
Thus $a^{\ast}+1-aa^{\co}$ and $a^n+1-aa^{\co}$ are both invertible,
which implies that $u=a^{\ast}a^n+1-aa^{\co}$ is invertible.

Conversely,
assume that there exists $b\in R$ such that $^{\circ}\!a=Rb$ and  $u=a^{\ast}a^n+b^{\ast}b$ is invertible,
where $n\geqslant 3$ is a positive integer.
Then $b\in$ $^{\circ}\!a$,
that is to say,
$ba=0=a^{\ast}b^{\ast}$.
Since $(a^{\ast})^{n-1}u=(a^{\ast})^na^n$ is symmetric,
namely,
$(a^{\ast})^{n-1}u=u^{\ast}a^{n-1}$,
which implies the following equation
\begin{equation}\label{x}
\begin{split}
    (u^{\ast})^{-1}(a^{\ast})^{n-1}=a^{n-1}u^{-1}.
\end{split}
\end{equation}
Also,
$a^{\ast}u=(a^{\ast})^2a^n$ implies $a^{\ast}=(a^{\ast})^2a^nu^{-1}$,
hence we have
\begin{equation*}
\begin{split}
    a
    &~=(u^{\ast})^{-1}(a^{\ast})^na^2=[(u^{\ast})^{-1}(a^{\ast})^{n-1}]a^{\ast}a^2\\
    &~\stackrel{(\ref{x})}{=}a^{n-1}u^{-1}a^{\ast}a^2\in a^2R\cap Ra^2,
\end{split}
\end{equation*}
so $a$ is group invertible with group inverse $a^{\#}$ according to Lemma~\ref{group-inverse}.
Since $1-a^{\#}a\in$ $^{\circ}\!a=Rb$,
thus $1-a^{\#}a=yb$ for some $y\in R$.
Pre-multiplication of $1-a^{\#}a=yb$ by $a$ and $b$ yield $ayb=0$ and $b=byb$,
respectively.
Therefore,
$u=a^{\ast}a^n+b^{\ast}b$ can be decomposed as $$u=a^{\ast}a^n+b^{\ast}b=(a^{\ast}a+b^{\ast}b)(a^{n-1}+yb).$$
Since $u$ is invertible and $a^{\ast}a+b^{\ast}b$ is symmetric,
thus both $a^{\ast}a+b^{\ast}b$ and $a^{n-1}+yb$ are invertible.
Set $x=a^{n-1}u^{-1}a^{\ast}=a^{n-1}(a^{n-1}+yb)^{-1}(a^{\ast}a+b^{\ast}b)^{-1}a^{\ast}$,
we show that $x$ is the core inverse of $a$.
Since
\begin{equation*}
\begin{split}
    ax
    &~=a^n(a^{n-1}+yb)^{-1}(a^{\ast}a+b^{\ast}b)^{-1}a^{\ast}\\
    &~=a(a^{n-1}+yb)(a^{n-1}+yb)^{-1}(a^{\ast}a+b^{\ast}b)^{-1}a^{\ast}\\
    &~=a(a^{\ast}a+b^{\ast}b)^{-1}a^{\ast}
\end{split}
\end{equation*}
shows that $(ax)^{\ast}=ax$,
\begin{equation*}
\begin{split}
    ax^2
    &~=(ax)x=a(a^{\ast}a+b^{\ast}b)^{-1}a^{\ast}a^{n-1}(a^{n-1}+yb)^{-1}(a^{\ast}a+b^{\ast}b)^{-1}a^{\ast}\\
    &~=a(a^{\ast}a+b^{\ast}b)^{-1}(a^{\ast}a+b^{\ast}b)a^{n-2}(a^{n-1}+yb)^{-1}(a^{\ast}a+b^{\ast}b)^{-1}a^{\ast}\\
    &~=aa^{n-2}(a^{n-1}+yb)^{-1}(a^{\ast}a+b^{\ast}b)^{-1}a^{\ast}=x,
\end{split}
\end{equation*}
and
\begin{equation*}
\begin{split}
    xa^2
    &~=a^{n-1}(a^{n-1}+yb)^{-1}(a^{\ast}a+b^{\ast}b)^{-1}a^{\ast}a^2\\
    &~=a^{n-1}(a^{n-1}+yb)^{-1}(a^{\ast}a+b^{\ast}b)^{-1}(a^{\ast}a+b^{\ast}b)a\\
    &~=a^{n-1}(a^{n-1}+yb)^{-1}a=(a^{n-1}+yb)^{-1}a^{n-1}a\\
    &~=(a^{n-1}+yb)^{-1}(a^{n-1}+yb)a=a,
\end{split}
\end{equation*}
thus $x=a^{\co}$.
\end{proof}

In the same way, there is a corresponding result for dual core inverse.

\begin{thm}
Let $a\in R$ and $n\geqslant 3$ a positive integer,
then $a\in R_{\co}$ if and only if there exists $c\in R$ such that $a^{\circ}=cR$ and  $v=a^na^{\ast}+cc^{\ast}$ is invertible.
In this case,
$$a_{\co}=a^{\ast}v^{-1}a^{n-1}.$$
\end{thm}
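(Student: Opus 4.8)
The plan is to reduce everything to the preceding ring-theoretic characterization of $R^{\co}$ by passing through the involution. The key observation is that $a$ is dual core invertible if and only if $a^{\ast}$ is core invertible, with $a_{\co}=\big((a^{\ast})^{\co}\big)^{\ast}$. Indeed, writing out the defining equations of the dual core inverse $d=a_{\co}$, namely $(da)^{\ast}=da$, $d^{2}a=d$ and $a^{2}d=a$, and applying $\ast$ to each of them shows that $d^{\ast}$ satisfies the three defining equations of the core inverse of $a^{\ast}$, and conversely. So first I would record this duality as the backbone of the argument.

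Next I would apply the previous theorem to the element $a^{\ast}$ (with the same $n\geqslant 3$): $a^{\ast}\in R^{\co}$ if and only if there is some $b\in R$ with $^{\circ}(a^{\ast})=Rb$ and $u=(a^{\ast})^{\ast}(a^{\ast})^{n}+b^{\ast}b=a(a^{\ast})^{n}+b^{\ast}b$ invertible, in which case $(a^{\ast})^{\co}=(a^{\ast})^{n-1}u^{-1}a$. The remaining task is purely to translate the three ingredients back through $\ast$.

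For the annihilator, note that $xa^{\ast}=0$ is equivalent to $ax^{\ast}=0$, so applying $\ast$ turns $^{\circ}(a^{\ast})=Rb$ into $a^{\circ}=(Rb)^{\ast}=b^{\ast}R$; setting $c=b^{\ast}$ gives exactly $a^{\circ}=cR$, and this correspondence is reversible. For the unit, a direct computation gives $u^{\ast}=a^{n}a^{\ast}+b^{\ast}b=a^{n}a^{\ast}+cc^{\ast}=v$, so $u$ is invertible if and only if $v=u^{\ast}$ is invertible, with $v^{-1}=(u^{-1})^{\ast}$. Finally, combining the duality of the first step with the formula for $(a^{\ast})^{\co}$ yields
\[
a_{\co}=\big((a^{\ast})^{\co}\big)^{\ast}=\big((a^{\ast})^{n-1}u^{-1}a\big)^{\ast}=a^{\ast}(u^{-1})^{\ast}a^{n-1}=a^{\ast}v^{-1}a^{n-1},
\]
which is the claimed representation.

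There is essentially no deep obstacle here; the work is entirely a matter of keeping the adjoints straight. The one place that rewards care is the unit condition: one must check that it is $u^{\ast}$ (and not $u$ itself) that equals $v$, since $a(a^{\ast})^{n}\neq a^{n}a^{\ast}$ in general, and hence that the term $b^{\ast}b$ must be matched with $cc^{\ast}$ via $c=b^{\ast}$. Alternatively, if one prefers to avoid invoking the duality wholesale, the same result can be obtained by transcribing the previous proof line by line, replacing left annihilators by right annihilators and each factor by its adjoint; but the route through $a^{\ast}$ is shorter and makes the symmetry transparent.
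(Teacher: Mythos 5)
Your proposal is correct, and it matches the paper in spirit: the paper gives no proof of this statement at all, merely remarking that it follows ``in the same way'' as the preceding theorem for $R^{\co}$, and your argument is precisely a rigorous realization of that appeal to symmetry. The difference is one of form: the paper's implicit route is to transcribe the previous proof line by line with left annihilators replaced by right annihilators, whereas you reduce the statement to the previous theorem outright via the involution, using that $d$ satisfies $(da)^{\ast}=da$, $d^{2}a=d$, $a^{2}d=a$ if and only if $d^{\ast}$ satisfies the three core-inverse equations for $a^{\ast}$, so that $a\in R_{\co}$ iff $a^{\ast}\in R^{\co}$ with $a_{\co}=\bigl((a^{\ast})^{\co}\bigr)^{\ast}$. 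Your bookkeeping is right at each translation step: $xa^{\ast}=0$ iff $ax^{\ast}=0$ gives $^{\circ}(a^{\ast})=Rb$ iff $a^{\circ}=b^{\ast}R$ (so $c=b^{\ast}$); the unit for $a^{\ast}$ is $u=a(a^{\ast})^{n}+b^{\ast}b$ whose adjoint is $u^{\ast}=a^{n}a^{\ast}+cc^{\ast}=v$, so invertibility of $u$ and $v$ are equivalent; and conjugating $(a^{\ast})^{\co}=(a^{\ast})^{n-1}u^{-1}a$ yields $a_{\co}=a^{\ast}v^{-1}a^{n-1}$. Your reduction buys a genuinely shorter and self-checking proof, at the modest cost of first verifying the duality $a_{\co}=\bigl((a^{\ast})^{\co}\bigr)^{\ast}$, which you do correctly; the paper's (omitted) mirrored-proof route avoids that lemma but duplicates all the computations of the preceding theorem.
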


Let $\varphi : X \rightarrow Y$ be a morphism in $\mathscr{C}$.
If $\eta\varphi=0 : N \rightarrow Y$ is the zero morphism,
then we shall call the morphism $\eta : N \rightarrow X$ an annihilator of the morphism $\varphi$.
Dually,
we call $\varphi$ a coannihilator of $\eta$.
(See for example, \cite{PR3}.)

\begin{thm}
Let $\varphi : X \rightarrow X$ be a morphism in $\mathscr{C}$ and $n\geqslant 2$ a positive integer.
Then $\varphi$ is core invertible if and only if there exists an annihilator $\eta : N \rightarrow X$ of $\varphi$ such that $\mu=\varphi^n+\eta^{\ast}\eta : X \rightarrow X$ is invertible.
In this case,
$$\varphi^{\co}=\varphi^{n-1}\mu^{-1}.$$
\end{thm}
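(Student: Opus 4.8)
The plan is to reduce both implications to Lemma~\ref{core-inverse 2}(i), treating the relevant endomorphisms as elements of the $\ast$-ring $\mathrm{End}(X)$ (here $\ast$ is an anti-automorphism) and handling $\eta$ only through the relation $\eta\varphi=0$. Throughout, write $e=\eta^{\ast}\eta:X\rightarrow X$. Since $\eta$ is an annihilator we have $\eta\varphi=0$, hence $e\varphi=\eta^{\ast}(\eta\varphi)=0$, and obviously $e^{\ast}=e$. Because $\mu=\varphi^{n}+e$ and $\mu^{\ast}=(\varphi^{\ast})^{n}+e$, the single relation $e\varphi=0$ yields the two identities $\mu\varphi=\varphi^{n+1}$ and $\mu^{\ast}\varphi=(\varphi^{\ast})^{n}\varphi$, which drive both directions.

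For the ``if'' part, assume $\mu$ is invertible. From $\mu\varphi=\varphi^{n+1}$ I get $\varphi=\mu^{-1}\varphi^{n+1}=(\mu^{-1}\varphi)\varphi^{n}$, and from $\mu^{\ast}\varphi=(\varphi^{\ast})^{n}\varphi$, using $(\mu^{\ast})^{-1}=(\mu^{-1})^{\ast}$, I get $\varphi=(\mu^{-1})^{\ast}(\varphi^{\ast})^{n}\varphi$. Thus $\varphi=\varepsilon(\varphi^{\ast})^{n}\varphi=\tau\varphi^{n}$ with $\varepsilon=(\mu^{-1})^{\ast}$ and $\tau=\mu^{-1}\varphi$, so Lemma~\ref{core-inverse 2}(i) shows $\varphi$ is core invertible with $\varphi^{\co}=\varphi^{n-1}\varepsilon^{\ast}=\varphi^{n-1}\mu^{-1}$, exactly the claimed formula.

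For the ``only if'' part I would exhibit a concrete annihilator: take $N=X$ and $\eta=1_{X}-\varphi\varphi^{\co}$. The core axioms give $\varphi\varphi^{\co}\varphi=\varphi$ (indeed $\varphi(\varphi^{\co})^{2}\varphi^{2}$ equals both $\varphi\varphi^{\co}\varphi$ and $\varphi$), so $\eta\varphi=0$ and $\eta$ is an annihilator; moreover $p:=\varphi\varphi^{\co}$ is a self-adjoint idempotent, whence $\eta^{\ast}\eta=\eta=1_{X}-p$ and $\mu=\varphi^{n}+1_{X}-p$. It remains to prove $\mu$ is invertible. The subtlety is that the self-adjoint projection $p=\varphi\varphi^{\co}$ differs from the group projection $g:=\varphi^{\co}\varphi=\varphi^{\#}\varphi$ (equal by Lemma~\ref{222}), so I cannot simply invoke that $\varphi^{n}+1_{X}-g$ is invertible with inverse $(\varphi^{\#})^{n}+1_{X}-g$ (which is routine from the group-inverse identities $\varphi g=\varphi$, $g\varphi^{\#}=\varphi^{\#}$, $\varphi^{n}(\varphi^{\#})^{n}=g$). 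Instead I compare the two projections: from $\varphi(\varphi^{\co})^{2}=\varphi^{\co}$ and $\varphi^{\co}\varphi^{2}=\varphi$ one obtains $pg=g$ and $gp=p$, so $(g-p)^{2}=0$ and $w:=1_{X}-p+g$ is invertible with inverse $1_{X}-g+p$. A short computation then gives $(\varphi^{n}+1_{X}-p)\bigl((\varphi^{\#})^{n}+1_{X}-g\bigr)=w$, and multiplying on the right by $\varphi^{n}+1_{X}-g$ shows $\mu=w\,(\varphi^{n}+1_{X}-g)$ is a product of two invertible morphisms, hence invertible.

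I expect the invertibility of $\mu$ in the converse to be the main obstacle. The ``if'' direction is effortless because it uses only $e\varphi=0$, but constructing an annihilator and proving invertibility forces one to confront the mismatch between the Moore--Penrose-type projection $\varphi\varphi^{\co}$ and the group projection $\varphi^{\#}\varphi$; the nilpotence $(g-p)^{2}=0$ is precisely what reconciles them and keeps the factorization argument short.
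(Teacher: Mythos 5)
Your proposal is correct, and its ``if'' half is exactly the paper's argument: from $\eta\varphi=0$ you extract $\varphi=(\mu^{-1})^{\ast}(\varphi^{\ast})^{n}\varphi=(\mu^{-1}\varphi)\varphi^{n}$ and invoke Lemma~\ref{core-inverse 2}(i) to get $\varphi^{\co}=\varphi^{n-1}\mu^{-1}$, which is word-for-word what the paper does. The ``only if'' half also starts the same way (the annihilator $\eta=1_{X}-\varphi\varphi^{\co}$, a self-adjoint idempotent, so $\mu=\varphi^{n}+1_{X}-\varphi\varphi^{\co}$), but you then diverge on how invertibility of $\mu$ is established. The paper simply exhibits the two-sided inverse $(\varphi^{\co})^{n}+1_{X}-\varphi^{\co}\varphi$ and asserts that both products equal $1_{X}$; note that this candidate already mixes the core-inverse powers with the group projection $g=\varphi^{\co}\varphi$, so the ``mismatch'' between $p=\varphi\varphi^{\co}$ and $g$ that you flag as the main obstacle is absorbed there implicitly (the verification uses the same identities you derive, namely $\varphi^{n}g=\varphi^{n}$, $p(\varphi^{\co})^{n}=(\varphi^{\co})^{n}$, $pg=g$, $gp=p$). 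Your route instead factors $\mu=w\,(\varphi^{n}+1_{X}-g)$ with $w=1_{X}-p+g$ unipotent (since $(g-p)^{2}=0$) and $\varphi^{n}+1_{X}-g$ a standard group-inverse unit; all the computations you cite ($pg=g$, $gp=p$, $(\varphi^{n}+1_{X}-p)((\varphi^{\#})^{n}+1_{X}-g)=w$) check out. What each approach buys: the paper's is shorter once one is willing to guess the inverse, and it stays entirely inside core-inverse calculus; yours requires no guess, makes the structure of $\mu$ transparent (unipotent correction times group unit), and yields the inverse $\mu^{-1}=\bigl((\varphi^{\#})^{n}+1_{X}-g\bigr)(1_{X}-g+p)$ as a byproduct --- at the cost of invoking the group inverse $\varphi^{\#}$ via Lemma~\ref{222}, which the paper's forward direction never needs.
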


\begin{proof}
Suppose that $\varphi$ is core invertible with core inverse $\varphi^{\co}$.
Since $(1_X-\varphi\varphi^{\co})\varphi=0$,
then $\eta=1_X-\varphi\varphi^{\co}$ is a annihilator of $\varphi$ such that $\eta^{\ast}=\eta=\eta^2$.
In this case,
$\mu=\varphi^n+\eta^{\ast}\eta=\varphi^n+1_X-\varphi\varphi^{\co}$.
Since
$$(\varphi^n+1_X-\varphi\varphi^{\co})((\varphi^{\co})^n+1_X-\varphi^{\co}\varphi)=1_X=((\varphi^{\co})^n+1_X-\varphi^{\co}\varphi)(\varphi^n+1_X-\varphi\varphi^{\co}),$$
$\mu=\varphi^n+\eta^{\ast}\eta$ is invertible.

Conversely,
if there exists an annihilator $\eta : N \rightarrow X$ of $\varphi$ such that $\mu=\varphi^n+\eta^{\ast}\eta : X \rightarrow X$ is invertible,
where $n\geqslant 2$ is a positive integer.
On the one hand,
$\varphi^{\ast}\mu=\varphi^{\ast}\varphi^n$,
which implies $\varphi^{\ast}=\varphi^{\ast}\varphi^n\mu^{-1}$,
so $\varphi=(\mu^{-1})^{\ast}(\varphi^{\ast})^n\varphi$.
On the other hand,
$\mu\varphi=\varphi^{n+1}$ shows that $\varphi=\mu^{-1}\varphi^{n+1}$.
Therefore,
$\varphi$ is core invertible with $\varphi^{\co}=\varphi^{n-1}\mu^{-1}$ by Lemma~\ref{core-inverse 2}.
\end{proof}

\begin{cor}
Let $a\in R$ and $n\geqslant 2$ a positive integer,
then $a\in R^{\co}$ if and only if there exists $b\in$ $^{\circ}\!a$ such that $u=a^n+b^{\ast}b$ is invertible.
In this case,
$$a^{\co}=a^{n-1}u^{-1}.$$
\end{cor}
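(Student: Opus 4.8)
The plan is to read this corollary as the ring-theoretic shadow of the preceding Theorem. A $\ast$-ring $R$ is (essentially) an additive category with a single object, and under this dictionary an annihilator $\eta$ of $a$ is precisely an element $b$ with $ba=0$, i.e.\ $b\in{}^{\circ}\!a$; then $\eta^{\ast}\eta$ becomes $b^{\ast}b$, the morphism $\mu$ becomes $u=a^{n}+b^{\ast}b$, and the formula $\varphi^{\co}=\varphi^{n-1}\mu^{-1}$ reads $a^{\co}=a^{n-1}u^{-1}$. Rather than argue about whether a one-object ring formally meets every axiom of an additive category, I would simply transcribe the short argument of that Theorem into $R$, which needs only composition, addition, and the involution.

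For the \textbf{only if} direction, suppose $a\in R^{\co}$ and set $b=1-aa^{\co}$. By Lemma~\ref{222}(1) the core inverse is in particular a $\{1,3\}$-inverse, so $aa^{\co}a=a$ and $(aa^{\co})^{\ast}=aa^{\co}$; hence $b$ is a symmetric idempotent, $ba=a-aa^{\co}a=0$ so that $b\in{}^{\circ}\!a$, and $b^{\ast}b=b=1-aa^{\co}$. Thus $u=a^{n}+1-aa^{\co}$, and I would exhibit its inverse explicitly as $(a^{\co})^{n}+1-a^{\co}a$, verifying that both products collapse to $1$ by means of the core-inverse identities $a^{\co}a^{2}=a$ and $a(a^{\co})^{2}=a^{\co}$ (which yield $a^{n}(a^{\co})^{n}=aa^{\co}$ and the expected orthogonality of the idempotents $aa^{\co}$, $a^{\co}a$). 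This is exactly the bookkeeping carried out in the preceding Theorem.

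For the \textbf{if} direction, assume $b\in{}^{\circ}\!a$ with $u=a^{n}+b^{\ast}b$ invertible; then $ba=0$ and, applying $\ast$, also $a^{\ast}b^{\ast}=0$. Two short computations drive the proof: from $a^{\ast}u=a^{\ast}a^{n}$ one gets $a^{\ast}=a^{\ast}a^{n}u^{-1}$ and therefore $a=(u^{\ast})^{-1}(a^{\ast})^{n}a$; from $ua=a^{n+1}$ one gets $a=u^{-1}a^{n+1}=(u^{-1}a)a^{n}$. These are precisely the two factorizations demanded by Lemma~\ref{core-inverse 2}(i), with $\varepsilon=(u^{\ast})^{-1}$ and $\tau=u^{-1}a$, so $a$ is core invertible and $a^{\co}=a^{n-1}\varepsilon^{\ast}=a^{n-1}u^{-1}$, as claimed.

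I do not anticipate a genuine obstacle, since the statement is essentially the preceding Theorem read inside $R$. The only point requiring care is the explicit two-sided inverse of $u$ in the forward direction: one must keep track of the idempotents $aa^{\co}$ and $a^{\co}a$ and their interaction with the powers $a^{n}$ and $(a^{\co})^{n}$, which is routine but is where an ordering slip would be easiest to commit.
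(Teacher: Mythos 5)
Your proposal is correct and follows essentially the same route as the paper: the corollary is the one-object (ring) instance of the preceding theorem, whose proof the paper gives by taking $\eta=1_X-\varphi\varphi^{\co}$ with explicit inverse $(\varphi^{\co})^n+1_X-\varphi^{\co}\varphi$ for the forward direction, and by deriving $\varphi=(\mu^{-1})^{\ast}(\varphi^{\ast})^n\varphi=\mu^{-1}\varphi^{n+1}$ and invoking Lemma~\ref{core-inverse 2} for the converse. Your transcription into $R$ (including $\varepsilon=(u^{\ast})^{-1}$, $\tau=u^{-1}a$, and $\varepsilon^{\ast}=u^{-1}$) reproduces that argument exactly.
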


Analogously, there are similar conclusions for dual core inverses, which are not to be repeated here.

\section{Core and Dual Core Inverses of  Regular Morphisms with Kernels and Cokernels}\label{w}
In \cite{MR},
Miao and Robinson investigated the group and Moore-Penrose inverses of regular morphisms with kernels and cokernels,
and they showed us two results as follows.
Let $\varphi : X \rightarrow Y$ be a morphism with kernel $\kappa : K \rightarrow X$ and cokernel $\lambda : Y \rightarrow L$ in an additive category $\mathscr{C}$.
(1) If $X=Y$,
then $\varphi$ has a group inverse $\varphi^{\#}$ if and only if $\varphi$ is regular and $\kappa\lambda$ is invertible.
(2) $\varphi$ has a Moore-Penrose inverse $\varphi^{\dagger}$ if and only if $\varphi$ is regular and both $\kappa\kappa^{\ast}$ and $\lambda^{\ast}\lambda$ are invertible.

Inspired by them,
we investigate the core and dual core inverses of regular morphisms with kernels and cokernels.

\begin{lem}\label{131313}
Let $\varphi : X \rightarrow Y$ be a morphism with kernel $\kappa : K \rightarrow X$,
then $\varphi$ is $\{1, 3\}$-invertible if and only if $\varphi$ is regular and $\kappa\kappa^{\ast} : K \rightarrow K$ is invertible.
In this case,
if $\psi : Y \rightarrow X$ is such that $\varphi\psi\varphi=\varphi$,
then
\begin{equation*}
\begin{split}
     \psi[1_X-\kappa^{\ast}(\kappa\kappa^{\ast})^{-1}\kappa]\in \varphi\{1, 3\}.
\end{split}
\end{equation*}
\end{lem}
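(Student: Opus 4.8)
The plan is to base both implications on one structural observation: for any inner inverse $\psi$ of $\varphi$ the idempotent $1_X-\varphi\psi$ kills $\varphi$ on the right, hence factors through the kernel $\kappa$ by its universal property, while $p:=\kappa^{\ast}(\kappa\kappa^{\ast})^{-1}\kappa$ is the self-adjoint projection that ``corrects'' $\varphi\psi$ into a symmetric idempotent. First I would record the elementary identities $p^{\ast}=p$, $p^2=p$, $\kappa p=\kappa$ and, since $\kappa\varphi=0$, also $p\varphi=0$; thus $e:=1_X-p$ is a self-adjoint idempotent with $e\varphi=\varphi$.

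For the ``if'' direction I take $\psi$ with $\varphi\psi\varphi=\varphi$ and $\kappa\kappa^{\ast}$ invertible, and set $\chi=\psi e=\psi[1_X-\kappa^{\ast}(\kappa\kappa^{\ast})^{-1}\kappa]$. The only nonroutine ingredient is the kernel universal property: from $(1_X-\varphi\psi)\varphi=\varphi-\varphi\psi\varphi=0$ I get $1_X-\varphi\psi=t\kappa$ for some $t:X\rightarrow K$. Post-composing with $p$ and using $\kappa p=\kappa$ gives $(1_X-\varphi\psi)p=1_X-\varphi\psi$, which rearranges to $\varphi\psi p=\varphi\psi-e$; substituting this into $\varphi\chi=\varphi\psi(1_X-p)=\varphi\psi-\varphi\psi p$ collapses the expression to $\varphi\chi=e$. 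Because $e$ is self-adjoint, equation $(3)$ holds at once, and $\varphi\chi\varphi=e\varphi=\varphi$ gives equation $(1)$; hence $\chi$ is a $\{1,3\}$-inverse and, in particular, $\varphi$ is regular with the asserted inverse.

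For the ``only if'' direction I start from a $\{1,3\}$-inverse $\chi$, so regularity is immediate and the task is to invert $\kappa\kappa^{\ast}$. I put $f=1_X-\varphi\chi$; from $\varphi\chi\varphi=\varphi$ and $(\varphi\chi)^{\ast}=\varphi\chi$ one sees that $f$ is a self-adjoint idempotent with $f\varphi=0$, so the universal property yields $f=s\kappa$ for some $s:X\rightarrow K$, while $\kappa\varphi=0$ forces $\kappa f=\kappa$. Then $\kappa s\kappa=\kappa f=\kappa$, and since a kernel is a monomorphism I may cancel $\kappa$ on the right to obtain the splitting $\kappa s=1_K$. Self-adjointness of $f$ gives $\kappa^{\ast}s^{\ast}=s\kappa$, so $\kappa\kappa^{\ast}s^{\ast}=\kappa(s\kappa)=(\kappa s)\kappa=\kappa$; right-multiplying by $s$ yields $\kappa\kappa^{\ast}(s^{\ast}s)=\kappa s=1_K$, and taking adjoints gives $(s^{\ast}s)\kappa\kappa^{\ast}=1_K$. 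Hence $\kappa\kappa^{\ast}$ is invertible with inverse $s^{\ast}s$.

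The main obstacle is precisely this last step: producing a two-sided inverse of $\kappa\kappa^{\ast}$. The delicate points are recognizing that the kernel, being a monomorphism, supplies the splitting $\kappa s=1_K$ from $\kappa s\kappa=\kappa$, and that the self-adjointness of $f=1_X-\varphi\chi$ is exactly what converts this one-sided splitting into the symmetric inverse $s^{\ast}s$. The factorization through the kernel (of $1_X-\varphi\psi$ in one direction, of $1_X-\varphi\chi$ in the other) is used in both implications and is the common engine of the argument.
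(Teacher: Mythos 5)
Your proposal is correct and follows essentially the same route as the paper: in the converse direction you build the same candidate $\chi=\psi[1_X-\kappa^{\ast}(\kappa\kappa^{\ast})^{-1}\kappa]$, factor $1_X-\varphi\psi$ through the kernel, and show $\varphi\chi=1_X-\kappa^{\ast}(\kappa\kappa^{\ast})^{-1}\kappa$ is a symmetric idempotent fixing $\varphi$; in the direct direction you factor $1_X-\varphi\chi=s\kappa$, use monicity of $\kappa$ to get the splitting $\kappa s=1_K$, and use self-adjointness to produce the two-sided inverse $s^{\ast}s$ of $\kappa\kappa^{\ast}$ (the paper's $\zeta^{\ast}\zeta$). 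The only cosmetic differences are your reorganization of the $\varphi\chi$ computation via the projection $p$ and your explicit adjoint-taking step where the paper simply invokes symmetry of $\kappa\kappa^{\ast}$.
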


\begin{proof}
Suppose that $\varphi$ is $\{1, 3\}$-invertible with $\{1, 3\}$-inverse $\varphi^{(1, 3)}$.
Since $\varphi\varphi^{(1, 3)}\varphi=\varphi$,
then $\varphi$ is regular.
Moreover,
since $(1_X-\varphi\varphi^{(1, 3)})\varphi=0$,
then by the definition of a kernel,
$1_X-\varphi\varphi^{(1, 3)}=\zeta\kappa$ for some $\zeta : X \rightarrow K$.
In addition,
since $\kappa\varphi=0$,
then $\kappa\zeta\kappa=\kappa(1_X-\varphi\varphi^{(1, 3)})=\kappa$,
and since $\kappa$ is monic,
$\kappa\zeta=1_K$.
Therefore,
$\zeta\kappa\zeta=\zeta$.
Consequently,
$\kappa$ is Moore-Penrose invertible with $\kappa^{\dagger}=\zeta$.
Since,
\begin{equation*}
\begin{split}
     (\kappa\kappa^{\ast})(\zeta^{\ast}\zeta)=\kappa(\zeta\kappa)^{\ast}\zeta=\kappa\zeta\kappa\zeta=1_K
\end{split}
\end{equation*}
and $\kappa\kappa^{\ast}$ is symmetric,
then $\kappa\kappa^{\ast}$ is invertible with $(\kappa\kappa^{\ast})^{-1}=\zeta^{\ast}\zeta=(\kappa^{\ast})^{\dagger}\kappa^{\dagger}$.

Conversely,
suppose that $\varphi\psi\varphi=\varphi$ and that $\kappa\kappa^{\ast} : K \rightarrow K$ is invertible,
where $\psi : Y \rightarrow X$ is morphism.
We prove that $\chi=\psi[1_X-\kappa^{\ast}(\kappa\kappa^{\ast})^{-1}\kappa]$ is a $\{1, 3\}$-inverse of $\varphi$.
Indeed,
since $(1_X-\varphi\psi)\varphi=0$,
then $1_X-\varphi\psi=\delta\kappa$ for some $\delta : X \rightarrow K$.
Therefore,
\begin{equation*}
\begin{split}
     \varphi\chi
     &~= \varphi\psi[1_X-\kappa^{\ast}(\kappa\kappa^{\ast})^{-1}\kappa] = (1_X-\delta\kappa)[1_X-\kappa^{\ast}(\kappa\kappa^{\ast})^{-1}\kappa]\\
     &~= 1_X-\delta\kappa-\kappa^{\ast}(\kappa\kappa^{\ast})^{-1}\kappa+\delta\kappa\kappa^{\ast}(\kappa\kappa^{\ast})^{-1}\kappa\\
     &~= 1_X-\kappa^{\ast}(\kappa\kappa^{\ast})^{-1}\kappa
\end{split}
\end{equation*}
is symmetric.
Furthermore,
Since $\kappa\varphi=0$,
then
\begin{equation*}
\begin{split}
     \varphi\chi\varphi=(1_X-\kappa^{\ast}(\kappa\kappa^{\ast})^{-1}\kappa)\varphi=\varphi.
\end{split}
\end{equation*}
Thus,
$\psi[1_X-\kappa^{\ast}(\kappa\kappa^{\ast})^{-1}\kappa]\in \varphi\{1, 3\}.$
\end{proof}

Similarly,
we have the following result.

\begin{lem}
Let $\varphi : X \rightarrow Y$ be a morphism with cokernel $\lambda : Y \rightarrow L$,
then $\varphi$ is $\{1, 4\}$-invertible if and only if $\varphi$ is regular and $\lambda^{\ast}\lambda : L \rightarrow L$ is invertible.
In this case,
if $\psi : Y \rightarrow X$ is such that $\varphi\psi\varphi=\varphi$,
then
\begin{equation*}
\begin{split}
     [1_X-\lambda(\lambda^{\ast}\lambda)^{-1}\lambda^{\ast}]\psi\in \varphi\{1, 4\}.
\end{split}
\end{equation*}
\end{lem}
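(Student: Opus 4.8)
The plan is to dualize the proof of Lemma~\ref{131313} verbatim, interchanging the monic kernel $\kappa$ with the epic cokernel $\lambda$, swapping left and right multiplication, and exchanging the roles of the two objects under $\ast$. As there, I would prove the two implications separately, and in the converse I would verify directly that the displayed morphism is a $\{1,4\}$-inverse. Throughout I use that composition is diagrammatic, so $\varphi\lambda=0$, $\lambda$ is epic, and $(\lambda\sigma)^{\ast}=\sigma^{\ast}\lambda^{\ast}$.

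For the forward direction, suppose $\varphi$ is $\{1,4\}$-invertible with $\{1,4\}$-inverse $\varphi^{(1,4)} : Y \rightarrow X$; then $\varphi\varphi^{(1,4)}\varphi=\varphi$ gives regularity. From $\varphi(1_Y-\varphi^{(1,4)}\varphi)=\varphi-\varphi\varphi^{(1,4)}\varphi=0$ and the universal property of the cokernel I obtain a factorization $1_Y-\varphi^{(1,4)}\varphi=\lambda\sigma$ for a unique $\sigma : L \rightarrow Y$. Post-composing with $\lambda$ and using $\varphi\lambda=0$ yields $\lambda\sigma\lambda=\lambda$, and since $\lambda$ is epic this forces $\sigma\lambda=1_L$, whence also $\sigma\lambda\sigma=\sigma$. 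Condition $(4)$ makes $\lambda\sigma=1_Y-\varphi^{(1,4)}\varphi$ self-adjoint while $\sigma\lambda=1_L$ is trivially self-adjoint, so $\sigma=\lambda^{\dagger}$. Finally I would compute $(\sigma\sigma^{\ast})(\lambda^{\ast}\lambda)=\sigma(\lambda\sigma)^{\ast}\lambda=\sigma\lambda\sigma\lambda=1_L$, exhibiting a one-sided inverse of $\lambda^{\ast}\lambda$; because $\lambda^{\ast}\lambda$ is self-adjoint this promotes to a two-sided inverse, so $\lambda^{\ast}\lambda$ is invertible with $(\lambda^{\ast}\lambda)^{-1}=\sigma\sigma^{\ast}$.

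For the converse, assume $\varphi\psi\varphi=\varphi$ with $\psi : Y \rightarrow X$ and that $\lambda^{\ast}\lambda$ is invertible, and set $\chi=[1_Y-\lambda(\lambda^{\ast}\lambda)^{-1}\lambda^{\ast}]\psi$. From $\varphi(1_Y-\psi\varphi)=0$ and the cokernel property I write $1_Y-\psi\varphi=\lambda\delta$ for some $\delta : L \rightarrow Y$. Then $\chi\varphi=[1_Y-\lambda(\lambda^{\ast}\lambda)^{-1}\lambda^{\ast}](1_Y-\lambda\delta)$; expanding and using $(\lambda^{\ast}\lambda)^{-1}(\lambda^{\ast}\lambda)=1_L$ collapses the cross terms and leaves $\chi\varphi=1_Y-\lambda(\lambda^{\ast}\lambda)^{-1}\lambda^{\ast}$, which is self-adjoint since $\lambda^{\ast}\lambda$ and hence its inverse are self-adjoint; this is equation $(4)$. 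Composing once more with $\varphi$ and using $\varphi\lambda=0$ gives $\varphi\chi\varphi=\varphi(1_Y-\lambda(\lambda^{\ast}\lambda)^{-1}\lambda^{\ast})=\varphi$, which is equation $(1)$. Hence $\chi\in\varphi\{1,4\}$, as claimed.

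The bulk of this is routine bookkeeping; the one step that needs genuine care is the passage from a one-sided inverse of $\lambda^{\ast}\lambda$ to full invertibility, which uses self-adjointness: if $p(\lambda^{\ast}\lambda)=1_L$ then applying $\ast$ gives $(\lambda^{\ast}\lambda)p^{\ast}=1_L$, and a standard monoid argument forces $p=p^{\ast}$ and invertibility. The other point to watch is domain tracking: since $\psi : Y \rightarrow X$, the idempotent $1-\lambda(\lambda^{\ast}\lambda)^{-1}\lambda^{\ast}$ is the one on the codomain $Y$ (so the displayed $1_X$ is read as $1_Y$), which is exactly what makes $\chi=[1_Y-\lambda(\lambda^{\ast}\lambda)^{-1}\lambda^{\ast}]\psi$ a morphism $Y \rightarrow X$ and is the only place where the dualization of Lemma~\ref{131313} differs in substance.
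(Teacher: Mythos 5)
Your proof is correct and is exactly what the paper intends: the lemma is stated there without proof (``Similarly, we have the following result''), the intended argument being precisely the dualization of the proof of Lemma~\ref{131313} that you carry out, including the key step where $(\sigma\sigma^{\ast})(\lambda^{\ast}\lambda)=1_L$ is promoted to two-sided invertibility via self-adjointness. You are also right that the displayed $1_X$ must be read as $1_Y$ when $X\neq Y$ --- a typo inherited from dualizing the kernel version --- and your domain bookkeeping handles this correctly.
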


\begin{thm}\label{kernel222}
Let $\varphi : X \rightarrow X$ be a morphism with kernel $\kappa : K \rightarrow X$ and cokernel $\lambda : X \rightarrow L$ in an additive category $\mathscr{C}$,
then $\varphi$ has a core inverse in $\mathscr{C}$ if and only if $\varphi$ is regular and both $\kappa\lambda : K \rightarrow L$ and $\kappa\kappa^{\ast} : K \rightarrow K$ are invertible.
In this case,
if $\psi : X \rightarrow X$ is such that $\varphi\psi\varphi=\varphi$,
then
\begin{equation*}
\begin{split}
     \varphi^{\co}=[1_X-\lambda(\kappa\lambda)^{-1}\kappa]\psi[1_X-\kappa^{\ast}(\kappa\kappa^{\ast})^{-1}\kappa].
\end{split}
\end{equation*}
\end{thm}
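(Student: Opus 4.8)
The plan is to reduce the statement to the three results already in hand and then glue them together with Lemma~\ref{222}. Recall that Lemma~\ref{222} says $\varphi$ is core invertible precisely when it is simultaneously group invertible and $\{1,3\}$-invertible, with $\varphi^{\co}=\varphi^{\#}\varphi\varphi^{(1,3)}$. The group-invertibility half will be controlled by $\kappa\lambda$ via the Miao--Robinson criterion quoted at the start of this section together with Lemma~\ref{kernel-group inverse}, and the $\{1,3\}$-invertibility half will be controlled by $\kappa\kappa^{\ast}$ via Lemma~\ref{131313}. So the whole theorem is essentially a matter of citing these pieces in the correct order.

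For the forward implication I would assume $\varphi$ has a core inverse. Lemma~\ref{222} then gives that $\varphi$ is group invertible, hence regular (since $\varphi\varphi^{\#}\varphi=\varphi$), and also $\{1,3\}$-invertible. Group invertibility fed into the Miao--Robinson criterion yields invertibility of $\kappa\lambda$, while $\{1,3\}$-invertibility fed into Lemma~\ref{131313} yields invertibility of $\kappa\kappa^{\ast}$. The converse runs the same machine backwards: from $\varphi$ regular with $\kappa\lambda$ and $\kappa\kappa^{\ast}$ invertible, Miao--Robinson makes $\varphi$ group invertible and Lemma~\ref{131313} makes $\varphi$ $\{1,3\}$-invertible, so Lemma~\ref{222} assembles a core inverse.

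It then remains to recover the displayed representation. From Lemma~\ref{131313} the morphism $\varphi^{(1,3)}=\psi[1_X-\kappa^{\ast}(\kappa\kappa^{\ast})^{-1}\kappa]$ is a $\{1,3\}$-inverse of $\varphi$. Since $\varphi$ is now known to be group invertible and carries the kernel $\kappa$ and cokernel $\lambda$, the forward direction of Lemma~\ref{kernel-group inverse} applies and delivers $\varphi\varphi^{\#}+\gamma\kappa=1_X$ with $\gamma=\lambda(\kappa\lambda)^{-1}$, that is $\varphi^{\#}\varphi=\varphi\varphi^{\#}=1_X-\lambda(\kappa\lambda)^{-1}\kappa$. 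Substituting both expressions into $\varphi^{\co}=\varphi^{\#}\varphi\,\varphi^{(1,3)}$ produces exactly $[1_X-\lambda(\kappa\lambda)^{-1}\kappa]\psi[1_X-\kappa^{\ast}(\kappa\kappa^{\ast})^{-1}\kappa]$; this is legitimate for the particular $\{1,3\}$-inverse just chosen, and is in any case independent of the choice since $\varphi^{\#}\varphi\,\varphi^{(1,3)}=\varphi^{\#}(\varphi\varphi^{(1,3)})$ and $\varphi\varphi^{(1,3)}$ does not depend on the $\{1,3\}$-inverse. The only point needing care is the identification $\varphi\varphi^{\#}=1_X-\lambda(\kappa\lambda)^{-1}\kappa$, but this is handed to us directly by Lemma~\ref{kernel-group inverse} once group invertibility is in place, and no separate check that $\varphi^2+\gamma\kappa$ is invertible is required, since that invertibility is itself one of the conclusions of that lemma. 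I therefore expect no genuinely hard step, only the bookkeeping of citing the lemmas in the right sequence.
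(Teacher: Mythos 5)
Your proposal is correct, and its equivalence half coincides with the paper's: both of you obtain the ``if and only if'' by splitting core invertibility, via Lemma~\ref{222}, into group invertibility (governed by $\kappa\lambda$ through the Miao--Robinson criterion) and $\{1,3\}$-invertibility (governed by $\kappa\kappa^{\ast}$ through Lemma~\ref{131313}). Where you genuinely diverge is the derivation of the representation. The paper does not invoke the product formula at this stage; it sets $\chi=[1_X-\lambda(\kappa\lambda)^{-1}\kappa]\psi[1_X-\kappa^{\ast}(\kappa\kappa^{\ast})^{-1}\kappa]$ and verifies the three defining equations $(\varphi\chi)^{\ast}=\varphi\chi$, $\chi\varphi^{2}=\varphi$, $\varphi\chi^{2}=\chi$ by direct computation, using the factorizations $1_X-\varphi\psi=\delta\kappa$ and $1_X-\psi\varphi=\lambda\zeta$ coming from the kernel and cokernel properties (and exhibiting $\varphi\chi=1_X-\kappa^{\ast}(\kappa\kappa^{\ast})^{-1}\kappa$ along the way). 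You instead assemble the formula from three quoted facts: the particular $\{1,3\}$-inverse $\psi[1_X-\kappa^{\ast}(\kappa\kappa^{\ast})^{-1}\kappa]$ from Lemma~\ref{131313}, the identity $\varphi^{\#}\varphi=\varphi\varphi^{\#}=1_X-\lambda(\kappa\lambda)^{-1}\kappa$ from Lemma~\ref{kernel-group inverse}, and $\varphi^{\co}=\varphi^{\#}\varphi\varphi^{(1,3)}$ from Lemma~\ref{222}; your remark that $\varphi\varphi^{(1,3)}$ does not depend on the choice of $\{1,3\}$-inverse correctly removes the only worry about applying that formula to your particular choice. Your route buys brevity (no computation at all) at the cost of two citation-level subtleties: the formula in Lemma~\ref{222} must hold for an arbitrary $\{1,3\}$-inverse (it does, as you argue), and the ``in this case'' conclusions of Lemma~\ref{kernel-group inverse} must apply to the \emph{given} cokernel $\lambda$ rather than only to some cokernel produced by that lemma; this is also fine, since cokernels are unique up to an invertible factor, under which $\gamma=\lambda(\kappa\lambda)^{-1}$ is unchanged, or directly because $\delta\kappa=1_X-\varphi\varphi^{\#}$ forces $\delta(\kappa\lambda)=\lambda$ and hence $\delta=\lambda(\kappa\lambda)^{-1}$. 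The paper's verification, by contrast, is self-contained and needs none of these uniqueness observations.
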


\begin{proof}
By Lemma~\ref{222},
Lemma~\ref{131313} and \cite[Theorem]{MR},
it is clear that $\varphi$ is core invertible if and only if $\varphi$ is regular and both $\kappa\lambda : K \rightarrow L$ and $\kappa\kappa^{\ast} : K \rightarrow K$ are invertible.

Suppose that $\psi : X \rightarrow X$ is such that $\varphi\psi\varphi=\varphi$,
we show that $\chi=[1_X-\lambda(\kappa\lambda)^{-1}\kappa]\psi[1_X-\kappa^{\ast}(\kappa\kappa^{\ast})^{-1}\kappa]$ is the core inverse of $\varphi$.
Indeed,
since $(1_X-\varphi\psi)\varphi=0=\varphi(1_X-\psi\varphi)$,
then $1_X-\varphi\psi=\delta\kappa$ and $1_X-\psi\varphi=\lambda\zeta$ for some $\delta : X \rightarrow K$ and $\zeta : L \rightarrow X$,
respectively.
Since $\kappa\varphi=0=\varphi\lambda$,
then
\begin{equation*}
\begin{split}
     \varphi\chi
     &~= \varphi[1_X-\lambda(\kappa\lambda)^{-1}\kappa]\psi[1_X-\kappa^{\ast}(\kappa\kappa^{\ast})^{-1}\kappa]\\
     &~= \varphi\psi[1_X-\kappa^{\ast}(\kappa\kappa^{\ast})^{-1}\kappa]\\
     &~= (1_X-\delta\kappa)[1_X-\kappa^{\ast}(\kappa\kappa^{\ast})^{-1}\kappa]\\
     &~= 1_X-\kappa^{\ast}(\kappa\kappa^{\ast})^{-1}\kappa
\end{split}
\end{equation*}
is symmetric.
In addition,
\begin{equation*}
\begin{split}
     \chi\varphi^2
     &~= [1_X-\lambda(\kappa\lambda)^{-1}\kappa]\psi[1_X-\kappa^{\ast}(\kappa\kappa^{\ast})^{-1}\kappa]\varphi^2\\
     &~= [1_X-\lambda(\kappa\lambda)^{-1}\kappa]\psi\varphi^2 = [1_X-\lambda(\kappa\lambda)^{-1}\kappa](1_X-\lambda\zeta)\varphi\\
     &~= [1_X-\lambda(\kappa\lambda)^{-1}\kappa]\varphi = \varphi
\end{split}
\end{equation*}
and
\begin{equation*}
\begin{split}
     \varphi\chi^2
     &~= [1_X-\kappa^{\ast}(\kappa\kappa^{\ast})^{-1}\kappa][1_X-\lambda(\kappa\lambda)^{-1}\kappa]\psi[1_X-\kappa^{\ast}(\kappa\kappa^{\ast})^{-1}\kappa]\\
     &~= [1_X-\lambda(\kappa\lambda)^{-1}\kappa]\psi[1_X-\kappa^{\ast}(\kappa\kappa^{\ast})^{-1}\kappa = \chi.
\end{split}
\end{equation*}
Therefore,
$\varphi$ is core invertible with core inverse $\varphi^{\co}=[1_X-\lambda(\kappa\lambda)^{-1}\kappa]\psi[1_X-\kappa^{\ast}(\kappa\kappa^{\ast})^{-1}\kappa]$.
\end{proof}

Similarly,
we can get a dually result about dual core inverse.
\begin{thm}\label{kernel222}
Let $\varphi : X \rightarrow X$ be a morphism with kernel $\kappa : K \rightarrow X$ and cokernel $\lambda : X \rightarrow L$ in an additive category $\mathscr{C}$,
then $\varphi$ has a dual core inverse in $\mathscr{C}$ if and only if $\varphi$ is regular and both $\kappa\lambda : K \rightarrow L$ and $\lambda^{\ast}\lambda : L \rightarrow L$ are invertible.
In this case,
if $\psi : X \rightarrow X$ is such that $\varphi\psi\varphi=\varphi$,
then
\begin{equation*}
\begin{split}
     \varphi_{\co}=[1_X-\lambda(\lambda^{\ast}\lambda)^{-1}\lambda^{\ast}]\psi[1_X-\lambda(\kappa\lambda)^{-1}\kappa].
\end{split}
\end{equation*}
\end{thm}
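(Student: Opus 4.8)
The plan is to dualize the proof of the core-inverse version of this theorem step by step, replacing each ingredient by its $\ast$-symmetric counterpart. For the characterization I would combine three already-available facts. By Lemma~\ref{222}(2), $\varphi$ is dual core invertible precisely when it is simultaneously group invertible and $\{1,4\}$-invertible. By the group-inverse statement of \cite[Theorem]{MR} recalled above, a regular $\varphi$ with kernel $\kappa$ and cokernel $\lambda$ is group invertible iff $\kappa\lambda$ is invertible. By the $\{1,4\}$-version (the lemma dual to Lemma~\ref{131313}), $\varphi$ is $\{1,4\}$-invertible iff it is regular and $\lambda^{\ast}\lambda$ is invertible. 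Intersecting these conditions yields exactly ``$\varphi$ regular and both $\kappa\lambda$ and $\lambda^{\ast}\lambda$ invertible,'' so the equivalence follows at once once the three lemmas are quoted.

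For the representation, fix $\psi$ with $\varphi\psi\varphi=\varphi$ and set $\chi=[1_{X}-\lambda(\lambda^{\ast}\lambda)^{-1}\lambda^{\ast}]\psi[1_{X}-\lambda(\kappa\lambda)^{-1}\kappa]$. It is convenient to abbreviate $q=\lambda(\lambda^{\ast}\lambda)^{-1}\lambda^{\ast}$ (the $\{1,4\}$-projector) and $f=\lambda(\kappa\lambda)^{-1}\kappa$ (the group idempotent), so that $\chi=(1_{X}-q)\psi(1_{X}-f)$. Exactly as in the core case, from $\varphi\psi\varphi=\varphi$ and the universal properties of the kernel and cokernel I would first record $1_{X}-\varphi\psi=\delta\kappa$ for some $\delta:X\rightarrow K$ and $1_{X}-\psi\varphi=\lambda\zeta$ for some $\zeta:L\rightarrow X$. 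The verification then runs through the three defining equations of the dual core inverse, namely $(\chi\varphi)^{\ast}=\chi\varphi$, $\varphi^{2}\chi=\varphi$, and $\chi^{2}\varphi=\chi$.

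Each computation rests on a short list of one-line identities: the annihilations $\kappa\varphi=0$ and $\varphi\lambda=0$; the consequences $f\varphi=0$ and $\kappa(1_{X}-f)=0$ (the latter from $\kappa\lambda(\kappa\lambda)^{-1}=1_{K}$); the relations $(1_{X}-q)\lambda=0$ and $\varphi^{2}\lambda=0$; the self-adjointness $q^{\ast}=q$ (since $(\lambda^{\ast}\lambda)^{\ast}=\lambda^{\ast}\lambda$); and the crucial product $fq=\lambda(\kappa\lambda)^{-1}(\kappa\lambda)(\lambda^{\ast}\lambda)^{-1}\lambda^{\ast}=q$, which gives $(1_{X}-f)(1_{X}-q)=1_{X}-f$. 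With these in hand I would show $\chi\varphi=(1_{X}-q)\psi\varphi=(1_{X}-q)(1_{X}-\lambda\zeta)=1_{X}-q$, which is self-adjoint; then $\varphi^{2}\chi=\varphi^{2}\psi(1_{X}-f)=\varphi(1_{X}-\delta\kappa)(1_{X}-f)=\varphi$, using $\varphi^{2}\lambda=0$ and $\kappa(1_{X}-f)=0$; and finally $\chi^{2}\varphi=\chi(\chi\varphi)=\chi(1_{X}-q)=(1_{X}-q)\psi(1_{X}-f)(1_{X}-q)=\chi$. By uniqueness of the dual core inverse this forces $\chi=\varphi_{\co}$.

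I expect the only real work to be the bookkeeping rather than any new idea. Because composition is written diagrammatically, one must track carefully which side each annihilator acts on, and in particular respect the asymmetry with the core-inverse formula: here the group idempotent $f$ sits on the \emph{right} and the left projector is $q=\lambda(\lambda^{\ast}\lambda)^{-1}\lambda^{\ast}$ rather than $\kappa^{\ast}(\kappa\kappa^{\ast})^{-1}\kappa$, so that the relevant product is $fq=q$ (not the reversed one). Once the order of factors is fixed correctly, the argument is a faithful dualization of the preceding theorem and introduces nothing essentially new.
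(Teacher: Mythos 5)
Your proposal is correct and follows essentially the same route as the paper: the paper proves the core-inverse version in detail (combining Lemma~\ref{222}, Lemma~\ref{131313} and the cited result of Miao--Robinson for the equivalence, then verifying the three defining equations against the explicit formula) and dismisses the dual version with ``similarly,'' and your step-by-step dualization, including the key identities $f\varphi=0$, $(1_X-q)\lambda=0$, $q^{\ast}=q$ and $fq=q$, is precisely that intended argument. All your computations check out, so nothing is missing.
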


\begin{cor}
Let $\varphi : X \rightarrow X$ be a morphism with kernel $\kappa : K \rightarrow X$ and cokernel $\lambda : X \rightarrow L$ in an additive category $\mathscr{C}$,
then the following statements are equivalent:\\
(1) $\varphi$ is both core invertible and dual core invertible in $\mathscr{C}$; \\
(2) $\varphi$ is both Moore-Penrose invertible and group invertible in $\mathscr{C}$; \\
(3) $\varphi$ is regular and $\kappa\lambda : K \rightarrow L$,
$\kappa\kappa^{\ast} : K \rightarrow K$ and $\lambda^{\ast}\lambda : L \rightarrow L$ are all invertible.\\
In this case,
if $\psi : X \rightarrow X$ is such that $\varphi\psi\varphi=\varphi$,
then
\begin{equation*}
\begin{split}
     \varphi^{\dagger}&~=[1_X-\lambda(\lambda^{\ast}\lambda)^{-1}\lambda^{\ast}]\psi[1_X-\kappa^{\ast}(\kappa\kappa^{\ast})^{-1}\kappa],\\
     \varphi^{\#}&~=[1_X-\lambda(\kappa\lambda)^{-1}\kappa]\psi[1_X-\lambda(\kappa\lambda)^{-1}\kappa],\\
     \varphi^{\co}&~=[1_X-\lambda(\kappa\lambda)^{-1}\kappa]\psi[1_X-\kappa^{\ast}(\kappa\kappa^{\ast})^{-1}\kappa],\\
     \varphi_{\co}&~=[1_X-\lambda(\lambda^{\ast}\lambda)^{-1}\lambda^{\ast}]\psi[1_X-\lambda(\kappa\lambda)^{-1}\kappa].
\end{split}
\end{equation*}
\end{cor}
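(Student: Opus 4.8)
The plan is to assemble the three equivalences from results already proved and then verify the four displayed formulas by direct substitution. For the equivalences, I would read off $(1)\Leftrightarrow(3)$ from the two preceding theorems: $\varphi$ is core invertible if and only if $\varphi$ is regular and both $\kappa\lambda$ and $\kappa\kappa^{\ast}$ are invertible, while $\varphi$ is dual core invertible if and only if $\varphi$ is regular and both $\kappa\lambda$ and $\lambda^{\ast}\lambda$ are invertible; conjoining these two characterizations gives exactly the condition in $(3)$. For $(2)\Leftrightarrow(3)$ I would invoke the cited results of Miao and Robinson: $\varphi$ has a group inverse if and only if $\varphi$ is regular and $\kappa\lambda$ is invertible, and $\varphi$ has a Moore--Penrose inverse if and only if $\varphi$ is regular and both $\kappa\kappa^{\ast}$ and $\lambda^{\ast}\lambda$ are invertible; once more the conjunction is precisely $(3)$. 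Hence $(1)$, $(2)$, $(3)$ are all equivalent to the single condition in $(3)$.

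For the representation, the key is to abbreviate the three morphisms $p = 1_X - \lambda(\kappa\lambda)^{-1}\kappa$, $q = 1_X - \kappa^{\ast}(\kappa\kappa^{\ast})^{-1}\kappa$, $r = 1_X - \lambda(\lambda^{\ast}\lambda)^{-1}\lambda^{\ast}$, and to record their elementary properties: each is idempotent, $q$ and $r$ are symmetric (since $\kappa\kappa^{\ast}$ and $\lambda^{\ast}\lambda$ are symmetric invertible), and one has $\varphi p = p\varphi = \varphi$, $q\varphi = \varphi$, $\varphi r = \varphi$, together with the annihilation identities $\kappa p = 0$, $p\lambda = 0$, $\kappa q = 0$, $r\lambda = 0$, all of which follow from $\kappa\varphi = 0 = \varphi\lambda$ and cancellation of the invertible factors. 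In this notation the four displayed expressions read $\varphi^{\co} = p\psi q$, $\varphi_{\co} = r\psi p$, $\varphi^{\#} = p\psi p$ and $\varphi^{\dagger} = r\psi q$; the first two are already established in the core-inverse theorem above and its dual, so only the group and Moore--Penrose formulas remain.

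For those two I would use that $\varphi\psi\varphi = \varphi$ forces $1_X - \varphi\psi = \delta\kappa$ and $1_X - \psi\varphi = \lambda\zeta$ for suitable $\delta : X \rightarrow K$ and $\zeta : L \rightarrow X$ (by the kernel and cokernel universal properties). Then for $\varphi^{\#} = p\psi p$ a short computation gives $\varphi\varphi^{\#} = \varphi\psi p = (1_X - \delta\kappa)p = p$, using $\varphi p = \varphi$ and $\kappa p = 0$, and symmetrically $\varphi^{\#}\varphi = p\psi\varphi = p(1_X - \lambda\zeta) = p$, using $p\varphi = \varphi$ and $p\lambda = 0$; hence $\varphi\varphi^{\#} = \varphi^{\#}\varphi = p$, from which $\varphi\varphi^{\#}\varphi = p\varphi = \varphi$ and $\varphi^{\#}\varphi\varphi^{\#} = p^2\psi p = p\psi p = \varphi^{\#}$ follow, together with the commutation condition, so $p\psi p$ is the group inverse. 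For $\varphi^{\dagger} = r\psi q$ the same bookkeeping yields $\varphi\varphi^{\dagger} = \varphi\psi q = q$ and $\varphi^{\dagger}\varphi = r\psi\varphi = r$, both symmetric, and then all four Penrose equations drop out at once.

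The calculations are entirely routine; there is no single hard step. The only point demanding care is consistency --- confirming that one fixed inner inverse $\psi$ simultaneously produces all four inverses --- and this is handled automatically, since every verification rests solely on the two identities $1_X - \varphi\psi = \delta\kappa$ and $1_X - \psi\varphi = \lambda\zeta$ together with the projection identities above, none of which depends on the choice among the four sandwiched forms. A secondary point worth flagging is that the symmetry of $\varphi\varphi^{\dagger}$ and $\varphi^{\dagger}\varphi$ rests on $q^{\ast} = q$ and $r^{\ast} = r$, which in turn use that $(\kappa\kappa^{\ast})^{-1}$ and $(\lambda^{\ast}\lambda)^{-1}$ are themselves symmetric, a consequence of the involution.
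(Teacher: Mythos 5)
Your proposal is correct and follows essentially the same route the paper intends: the corollary is stated without proof precisely because the three equivalences and the core/dual core formulas are read off by conjoining the two preceding theorems with the quoted Miao--Robinson characterizations, exactly as you assemble them. Your direct verification of the group and Moore--Penrose formulas (writing them as $p\psi p$ and $r\psi q$ and using only $1_X-\varphi\psi=\delta\kappa$, $1_X-\psi\varphi=\lambda\zeta$ and the projection identities $\kappa p=0$, $p\lambda=0$, $\kappa q=0$, $r\lambda=0$) is sound and merely fills in details the paper delegates to the cited reference.
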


\section{Bordered Inverses}\label{c}

Recall that a morphism $\varphi : X \rightarrow Y$ is $\ast$-left invertible if there is a morphism $\psi : Y \rightarrow X$ such that $\psi\varphi=1_Y$ and $(\varphi\psi)^{\ast}=\varphi\psi$.
Similarly,
$\varphi : X \rightarrow Y$ is $\ast$-right invertible if there is a morphism $\psi : Y \rightarrow X$ such that $\varphi\psi=1_X$ and $(\psi\varphi)^{\ast}=\psi\varphi$.
(See, \cite[p. 76]{DW}.)

\begin{lem}\cite[Lemma]{DW}\label{*-inv.}
If $\varphi : X \rightarrow Y$ is a morphism in a category with an involution.
Then \\
(1) $\varphi$ is $\ast$-left invertible if and only if $\varphi^{\ast}\varphi$ is invertible,
and in this case,
$\varphi^{\dagger}=(\varphi^{\ast}\varphi)^{-1}\varphi^{\ast}$;\\
(2) $\varphi$ is $\ast$-right invertible if and only if $\varphi\varphi^{\ast}$ is invertible,
and in this case,
$\varphi^{\dagger}=\varphi^{\ast}(\varphi\varphi^{\ast})^{-1}$.
\end{lem}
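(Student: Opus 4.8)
The plan is to prove (1) directly by producing an explicit inverse, and then to deduce (2) from (1) applied to $\varphi^{\ast}$, using the duality between $\ast$-left and $\ast$-right invertibility.

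For the forward implication of (1), suppose $\varphi$ is $\ast$-left invertible, witnessed by $\psi : Y \to X$ with $\psi\varphi = 1_Y$ and $\varphi\psi = (\varphi\psi)^{\ast} = \psi^{\ast}\varphi^{\ast}$. Applying $\ast$ to $\psi\varphi = 1_Y$ also gives $\varphi^{\ast}\psi^{\ast} = 1_Y$. I would then check that $\psi\psi^{\ast} : Y \to Y$ is a two-sided inverse of $\varphi^{\ast}\varphi$: expanding $(\varphi^{\ast}\varphi)(\psi\psi^{\ast}) = \varphi^{\ast}(\varphi\psi)\psi^{\ast} = (\varphi^{\ast}\psi^{\ast})(\varphi^{\ast}\psi^{\ast}) = 1_Y$ via the symmetry of $\varphi\psi$, and symmetrically $(\psi\psi^{\ast})(\varphi^{\ast}\varphi) = \psi(\varphi\psi)\varphi = (\psi\varphi)(\psi\varphi) = 1_Y$. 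Hence $\varphi^{\ast}\varphi$ is invertible.

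For the converse, set $\psi = (\varphi^{\ast}\varphi)^{-1}\varphi^{\ast}$ and verify it is the Moore-Penrose inverse. Directly, $\psi\varphi = (\varphi^{\ast}\varphi)^{-1}(\varphi^{\ast}\varphi) = 1_Y$, which gives both $\psi\varphi\psi = \psi$ and, since $1_Y$ is symmetric, Penrose equation (4). Because $\varphi^{\ast}\varphi$ is symmetric, so is its inverse, and therefore $\varphi\psi = \varphi(\varphi^{\ast}\varphi)^{-1}\varphi^{\ast}$ equals its own conjugate, giving equation (3) and in particular $\ast$-left invertibility; equation (1) follows from $\varphi\psi\varphi = \varphi(\varphi^{\ast}\varphi)^{-1}(\varphi^{\ast}\varphi) = \varphi$. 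By uniqueness of the Moore-Penrose inverse, $\varphi^{\dagger} = (\varphi^{\ast}\varphi)^{-1}\varphi^{\ast}$.

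To obtain (2) I would apply (1) to $\varphi^{\ast} : Y \to X$. First note that $\varphi$ is $\ast$-right invertible exactly when $\varphi^{\ast}$ is $\ast$-left invertible: if $\psi$ witnesses the former then $\psi^{\ast}$ witnesses the latter, and conversely. Part (1) then identifies this with invertibility of $(\varphi^{\ast})^{\ast}\varphi^{\ast} = \varphi\varphi^{\ast}$ and yields $(\varphi^{\ast})^{\dagger} = (\varphi\varphi^{\ast})^{-1}\varphi$; combining this with the standard identity $(\varphi^{\ast})^{\dagger} = (\varphi^{\dagger})^{\ast}$ and the symmetry of $\varphi\varphi^{\ast}$, one application of $\ast$ gives $\varphi^{\dagger} = \varphi^{\ast}(\varphi\varphi^{\ast})^{-1}$. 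The argument has no serious obstacle; the only genuine care is bookkeeping, since the involution reverses composition order and composites are written in diagrammatic order, so the main risk is mis-ordering factors when expanding expressions such as $(\varphi\psi)^{\ast}$. Once the candidate $\psi\psi^{\ast}$ (respectively the formula $(\varphi^{\ast}\varphi)^{-1}\varphi^{\ast}$) is spotted, everything reduces to short formal identities.
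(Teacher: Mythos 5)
Your proof is correct in every step, but there is nothing in the paper to compare it against: this lemma is imported verbatim from Robinson and Puystjens \cite{DW} and the paper gives no proof of it, treating it as known background. Your argument is the natural self-contained one. For (1), in the forward direction you produce an explicit two-sided inverse $\psi\psi^{\ast}$ of $\varphi^{\ast}\varphi$ from a $\ast$-left inverse $\psi$, using only $\psi\varphi=1_Y$, its conjugate $\varphi^{\ast}\psi^{\ast}=1_Y$, and the symmetry of $\varphi\psi$; both expansions $(\varphi^{\ast}\varphi)(\psi\psi^{\ast})=1_Y$ and $(\psi\psi^{\ast})(\varphi^{\ast}\varphi)=1_Y$ check out with the paper's diagrammatic composition convention. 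In the converse you verify all four Penrose equations for $(\varphi^{\ast}\varphi)^{-1}\varphi^{\ast}$, and the symmetry of $(\varphi^{\ast}\varphi)^{-1}$ that you need for equation (3) does follow from $(u^{-1})^{\ast}=(u^{\ast})^{-1}$ applied to the symmetric morphism $u=\varphi^{\ast}\varphi$. Deducing (2) by applying (1) to $\varphi^{\ast}$ is legitimate: your observation that $\psi$ witnesses $\ast$-right invertibility of $\varphi$ exactly when $\psi^{\ast}$ witnesses $\ast$-left invertibility of $\varphi^{\ast}$ is a two-line check. The only item you invoke without proof is $(\varphi^{\ast})^{\dagger}=(\varphi^{\dagger})^{\ast}$; this is itself immediate (apply $\ast$ to the four Penrose equations for $\varphi^{\dagger}$), so it is not a gap, though a fully self-contained write-up should include that line, or alternatively prove (2) by directly verifying the Penrose equations for $\varphi^{\ast}(\varphi\varphi^{\ast})^{-1}$ in mirror image of (1).
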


\begin{lem}\cite[Corollary $3$]{DW}\label{*-inv. group}
Let $\varphi : X \rightarrow X$ be a morphism of an additive category $\mathscr{C}$.
If $\kappa : K \rightarrow X$ is a kernel and $\lambda : X \rightarrow L$ is a cokernel of $\varphi$,
then $\varphi$ has a group inverse in $\mathscr{C}$ if and only if
$$\mathscr{G}=\left(
                            \begin{array}{cc}
                              \varphi & \lambda \\
                              \kappa & 0  \\
                            \end{array}
                          \right) : (X, K) \rightarrow (X, L)$$
is invertible in $\mathscr{M_C}$.
In this case,
$\kappa\lambda : K \rightarrow L$ is invertible and
$$\mathscr{G}^{-1}=\left(
                            \begin{array}{cc}
                              \varphi^{\#} & \lambda(\kappa\lambda)^{-1} \\
                              (\kappa\lambda)^{-1}\kappa & 0  \\
                            \end{array}
                          \right) : (X, L) \rightarrow (X, K).$$
\end{lem}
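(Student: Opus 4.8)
The plan is to prove the equivalence by treating the two implications separately, computing in $\mathscr{M_C}$ with the convention that its matrices multiply entrywise exactly as ordinary matrices do. The two facts I would use constantly are that $\kappa$ being a kernel of $\varphi$ gives $\kappa\varphi=0$, and that $\lambda$ being a cokernel of $\varphi$ gives $\varphi\lambda=0$; these annihilation relations, together with Lemma~\ref{kernel-group inverse} and Lemma~\ref{group-inverse}, drive the whole argument.

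For the implication that group invertibility of $\varphi$ forces $\mathscr{G}$ to be invertible, I would first apply Lemma~\ref{kernel-group inverse} to obtain that $\kappa\lambda$ is invertible, that $\gamma=\lambda(\kappa\lambda)^{-1}$ satisfies $\kappa\gamma=1_K$ and $\varphi\gamma=0$, and that $\varphi\varphi^{\#}+\gamma\kappa=1_X$. Writing $\delta=(\kappa\lambda)^{-1}\kappa$, so that $\delta\lambda=1_L$ and $\delta\varphi=0$, I would propose
\begin{equation*}
\mathscr{H}=\left(\begin{array}{cc}\varphi^{\#}&\gamma\\ \delta&0\end{array}\right):(X,L)\rightarrow(X,K)
\end{equation*}
as the candidate inverse and verify $\mathscr{G}\mathscr{H}=1_{(X,K)}$ and $\mathscr{H}\mathscr{G}=1_{(X,L)}$ entry by entry. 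The diagonal entries collapse to $\varphi\varphi^{\#}+\gamma\kappa=1_X$, $\kappa\gamma=1_K$, $\varphi^{\#}\varphi+\gamma\kappa=1_X$ and $\delta\lambda=1_L$, while the four off-diagonal entries vanish because $\varphi\gamma=0$, $\kappa\varphi^{\#}=\kappa\varphi(\varphi^{\#})^{2}=0$, $\varphi^{\#}\lambda=(\varphi^{\#})^{2}\varphi\lambda=0$ and $\delta\varphi=0$. This both establishes invertibility of $\mathscr{G}$ and, by uniqueness of inverses, pins down $\mathscr{G}^{-1}=\mathscr{H}$, which is the displayed formula.

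For the converse I would start from an inverse of $\mathscr{G}$ written with entries $a:X\rightarrow X$, $b:X\rightarrow K$, $c:L\rightarrow X$ and $d:L\rightarrow K$, and read off the entrywise identities contained in $\mathscr{G}\mathscr{G}^{-1}=1_{(X,K)}$ and $\mathscr{G}^{-1}\mathscr{G}=1_{(X,L)}$; the ones I need are $\varphi a+\lambda c=1_X$, $a\varphi+b\kappa=1_X$, $\kappa b=1_K$ and $c\lambda=1_L$. First I would show $\kappa\lambda$ is invertible with inverse $cb$: from $\lambda c=1_X-\varphi a$ and $\kappa\varphi=0$ one gets $\kappa\lambda c=\kappa$, hence $(\kappa\lambda)(cb)=\kappa b=1_K$, and dually from $b\kappa=1_X-a\varphi$ and $\varphi\lambda=0$ one gets $b\kappa\lambda=\lambda$, hence $(cb)(\kappa\lambda)=c\lambda=1_L$. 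Then, multiplying the two ``diagonal'' relations by $\varphi$, the relation $\varphi a=1_X-\lambda c$ together with $\varphi\lambda=0$ yields $\varphi^{2}a=\varphi$, and $a\varphi=1_X-b\kappa$ together with $\kappa\varphi=0$ yields $a\varphi^{2}=\varphi$. Thus $\varphi=\varphi^{2}a=a\varphi^{2}$, and Lemma~\ref{group-inverse}, read in the endomorphism ring of $X$, shows that $\varphi$ is group invertible.

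The hard part is the converse, and specifically the step just described: the scalar identities produced by the two matrix products are individually inert and, under naive manipulation, tend merely to reproduce one another, so no single one of them alone exhibits $\varphi$ as group invertible. The decisive observation is that pre- or post-composing the diagonal relations $\varphi a=1_X-\lambda c$ and $a\varphi=1_X-b\kappa$ with $\varphi$ annihilates the $\lambda c$ and $b\kappa$ terms through $\varphi\lambda=0$ and $\kappa\varphi=0$, collapsing them into the factorisations $\varphi=\varphi^{2}a=a\varphi^{2}$ that Lemma~\ref{group-inverse} recognises. Once group invertibility is secured, the explicit form of $\mathscr{G}^{-1}$ needs no separate work: it is delivered by the forward construction together with uniqueness of two-sided inverses.
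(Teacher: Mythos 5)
Your proof is correct, but there is nothing in the paper to compare it against: the paper does not prove this lemma, it simply quotes it as \cite[Corollary 3]{DW}. What you have produced is therefore a self-contained derivation from the paper's own ingredients, and it holds up under checking. The forward direction correctly packages the output of Lemma~\ref{kernel-group inverse} (invertibility of $\kappa\lambda$, the cokernel $\gamma=\lambda(\kappa\lambda)^{-1}$, and $\varphi\varphi^{\#}+\gamma\kappa=1_X$) into the explicit candidate $\mathscr{H}$, with the off-diagonal entries annihilated by $\kappa\varphi=0=\varphi\lambda$ together with $\varphi^{\#}=\varphi(\varphi^{\#})^{2}=(\varphi^{\#})^{2}\varphi$; the converse correctly extracts $\kappa\lambda\cdot cb=1_K$, $cb\cdot\kappa\lambda=1_L$ and the factorizations $\varphi=\varphi^{2}a=a\varphi^{2}$ from four of the eight entry identities, and then invokes Hartwig's criterion (Lemma~\ref{group-inverse}). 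Two points are worth making explicit to be fully rigorous, though neither is a gap: (i) Lemma~\ref{kernel-group inverse} only asserts the existence of \emph{some} cokernel with the stated properties, so applying its conclusions to the \emph{given} $\lambda$ tacitly uses uniqueness of cokernels up to isomorphism (under which $\gamma=\lambda(\kappa\lambda)^{-1}$ is unchanged); the paper itself uses that lemma with exactly the same looseness, so this matches its level of rigor. (ii) Lemma~\ref{group-inverse} is stated for a ring, and your application of it ``in the endomorphism ring of $X$'' is legitimate because $\mathscr{C}$ is additive, so the endomorphisms of $X$ form a unital ring in which group invertibility coincides with group invertibility of the morphism $\varphi$ in the category; this is the same convention the paper invokes when transferring its ring lemmas to the category setting.
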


\begin{thm}
Let $\varphi : X \rightarrow X$ be a morphism of an additive category $\mathscr{C}$ with an involution $\ast$.
If $\kappa : K \rightarrow X$ is a kernel of $\varphi$ and $\lambda : X \rightarrow L$ is a cokernel of $\varphi$,
then $\varphi$ has a core inverse $\varphi^{\co}$ in $\mathscr{C}$ if and only if
$$\left(
                            \begin{array}{c}
                              \varphi \\
                              \kappa \\
                            \end{array}
                          \right) : (X, K) \rightarrow (X)$$
is $\ast$-left invertible in $\mathscr{M_C}$ and
$$\mathscr{G}=\left(
                            \begin{array}{cc}
                              \varphi & \lambda \\
                              \kappa & 0  \\
                            \end{array}
                          \right) : (X, K) \rightarrow (X, L)$$
is invertible in $\mathscr{M_C}$.
In this case,
$\kappa\lambda : K \rightarrow L$ is invertible and
$$\mathscr{G}^{-1}=\left(
                            \begin{array}{cc}
                              (\varphi^{\co})^2\varphi & \lambda(\kappa\lambda)^{-1} \\
                              (\kappa\lambda)^{-1}\kappa & 0  \\
                            \end{array}
                          \right) : (X, L) \rightarrow (X, K).$$
\end{thm}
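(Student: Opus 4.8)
The plan is to translate the two matrix hypotheses into conditions on $\varphi$ and $\kappa$ alone and then feed them into Theorem~\ref{kernel}, Lemma~\ref{*-inv. group} and Lemma~\ref{222}; almost nothing new has to be proved. Write $C=\left(\begin{smallmatrix}\varphi\\ \kappa\end{smallmatrix}\right):(X,K)\to(X)$. Its adjoint in $\mathscr{M_C}$ is the row $C^{\ast}=(\varphi^{\ast}\ \ \kappa^{\ast}):(X)\to(X,K)$, so that $C^{\ast}C=\varphi^{\ast}\varphi+\kappa^{\ast}\kappa:X\to X$. Hence, by Lemma~\ref{*-inv.}(1), $C$ is $\ast$-left invertible in $\mathscr{M_C}$ if and only if $\varphi^{\ast}\varphi+\kappa^{\ast}\kappa$ is invertible. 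Likewise Lemma~\ref{*-inv. group} says that $\mathscr{G}$ is invertible if and only if $\varphi$ is group invertible, and in that case $\kappa\lambda$ is invertible and, by Lemma~\ref{kernel-group inverse}, so is $\varphi^{2}+\gamma\kappa$, where $\gamma=\lambda(\kappa\lambda)^{-1}$.

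The bridge between these conditions is the factorization $\varphi^{\ast}\varphi^{3}+\kappa^{\ast}\kappa=(\varphi^{\ast}\varphi+\kappa^{\ast}\kappa)(\varphi^{2}+\gamma\kappa)$ established in the proof of Theorem~\ref{kernel} (it uses only $\kappa\varphi=0=\varphi\lambda$ and the definition of $\gamma$), together with the symmetry of $\varphi^{\ast}\varphi+\kappa^{\ast}\kappa$. For necessity, suppose $\varphi$ is core invertible. By Lemma~\ref{222}(1) it is group invertible, so $\mathscr{G}$ is invertible; and the proof of Theorem~\ref{kernel} already exhibits $\varphi^{\ast}\varphi+\kappa^{\ast}\kappa$ as invertible (with inverse $\varphi^{\co}(\varphi^{\co})^{\ast}+\gamma\gamma^{\ast}$), so $C$ is $\ast$-left invertible. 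For sufficiency, if $C$ is $\ast$-left invertible and $\mathscr{G}$ is invertible, then $\varphi^{\ast}\varphi+\kappa^{\ast}\kappa$ and $\varphi^{2}+\gamma\kappa$ are both invertible, hence so is their product $\varphi^{\ast}\varphi^{3}+\kappa^{\ast}\kappa$; since the cokernel $\lambda$ is given, Theorem~\ref{kernel} delivers the core inverse. The invertibility of $\kappa\lambda$ is part of the output of Lemma~\ref{*-inv. group}.

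It remains to put $\mathscr{G}^{-1}$ in the stated form. By Lemma~\ref{*-inv. group} its $(1,1)$-entry is $\varphi^{\#}$, while the remaining entries already match, so I only need the identity $\varphi^{\#}=(\varphi^{\co})^{2}\varphi$. I would read this off from the defining relations $\varphi^{\co}\varphi^{2}=\varphi$ and $\varphi(\varphi^{\co})^{2}=\varphi^{\co}$: they give $\varphi(\varphi^{\co})^{2}\varphi=\varphi^{\co}\varphi=(\varphi^{\co})^{2}\varphi^{2}$, so $(\varphi^{\co})^{2}\varphi$ commutes with $\varphi$, and a two-line check shows it is a reflexive inverse of $\varphi$; uniqueness of the group inverse (Lemma~\ref{group-inverse}) then forces $(\varphi^{\co})^{2}\varphi=\varphi^{\#}$. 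Substituting into the expression for $\mathscr{G}^{-1}$ from Lemma~\ref{*-inv. group} yields the asserted matrix.

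The computations are all routine; the two points requiring genuine care are the correct identification $C^{\ast}C=\varphi^{\ast}\varphi+\kappa^{\ast}\kappa$ (which depends on keeping the involution and the composition convention in $\mathscr{M_C}$ straight) and the identity $\varphi^{\#}=(\varphi^{\co})^{2}\varphi$. Everything else is a matter of quoting Theorem~\ref{kernel}, Lemma~\ref{*-inv.} and Lemma~\ref{*-inv. group} and observing that the two matrix conditions are exactly the invertibility of the two factors appearing in the displayed factorization of $\varphi^{\ast}\varphi^{3}+\kappa^{\ast}\kappa$.
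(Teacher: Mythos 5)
Your proof is correct and takes essentially the same route as the paper: both translate the two matrix hypotheses through Lemma~\ref{*-inv.} and Lemma~\ref{*-inv. group}, and both hinge on the factorization $\varphi^{\ast}\varphi^{3}+\kappa^{\ast}\kappa=(\varphi^{\ast}\varphi+\kappa^{\ast}\kappa)(\varphi^{2}+\gamma\kappa)$ combined with Theorem~\ref{kernel} and the symmetry of $\varphi^{\ast}\varphi+\kappa^{\ast}\kappa$. The only substantive difference is that you explicitly verify the identity $\varphi^{\#}=(\varphi^{\co})^{2}\varphi$ to identify the $(1,1)$-entry of $\mathscr{G}^{-1}$, a step the paper dismisses as ``easy to verify'' (note only that uniqueness of the group inverse is stated in the introduction, not in Lemma~\ref{group-inverse} as you cite).
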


\begin{proof}
By Theorem~\ref{kernel},
$\varphi$ has a core inverse in $\mathscr{C}$ if and only if both  $\kappa\lambda$ and $\varphi^{\ast}\varphi^3+\kappa^{\ast}\kappa$ are invertible.
Since $\varphi^{\ast}\varphi^3+\kappa^{\ast}\kappa=(\varphi^{\ast}\varphi+\kappa^{\ast}\kappa)(\varphi^2+\lambda(\kappa\lambda)^{-1}\kappa)$ and $\varphi^{\ast}\varphi+\kappa^{\ast}\kappa$ is symmetric,
then $\varphi^{\ast}\varphi^3+\kappa^{\ast}\kappa$ is invertible if and only if $\varphi^{\ast}\varphi+\kappa^{\ast}\kappa$ and $\varphi^2+\lambda(\kappa\lambda)^{-1}\kappa$ are both invertible.
By Lemma~\ref{*-inv.},
$\varphi^{\ast}\varphi+\kappa^{\ast}\kappa$ is invertible if and only if $\left(
                            \begin{array}{c}
                              \varphi \\
                              \kappa \\
                            \end{array}
                          \right)^{\ast}
\left(
                            \begin{array}{c}
                              \varphi \\
                              \kappa \\
                            \end{array}
                          \right)$ is invertible in $\mathscr{C}$ if and only if
$\left(
                            \begin{array}{c}
                              \varphi \\
                              \kappa \\
                            \end{array}
                          \right)$ is $\ast$-left invertible in $\mathscr{M_C}$.
Therefore,
the conclusion is obtained by the previous proof,
Lemma~\ref{kernel-group inverse} and Lemma~\ref{*-inv. group}.
And it is easy to verify that $\left(
                            \begin{array}{cc}
                              (\varphi^{\co})^2\varphi & \lambda(\kappa\lambda)^{-1} \\
                              (\kappa\lambda)^{-1}\kappa & 0  \\
                            \end{array}
                          \right) : (X, L) \rightarrow (X, K)$ is the inverse of $\mathscr{G}$.
\end{proof}

We have a dually theorem about dual core inverse.
\begin{thm}
Let $\varphi : X \rightarrow X$ be a morphism of an additive category $\mathscr{C}$ with an involution $\ast$.
If $\kappa : K \rightarrow X$ is a kernel of $\varphi$ and $\lambda : X \rightarrow L$ is a cokernel of $\varphi$,
then $\varphi$ has a dual core inverse $\varphi_{\co}$ in $\mathscr{C}$ if and only if
$$(\varphi, \lambda) : (X) \rightarrow (X, L)$$
is $\ast$-right invertible and
$$\mathscr{G}=\left(
                            \begin{array}{cc}
                              \varphi & \lambda \\
                              \kappa & 0  \\
                            \end{array}
                          \right) : (X, K) \rightarrow (X, L)$$
is invertible in $\mathscr{M_C}$.
In this case,
$\kappa\lambda : K \rightarrow L$ is invertible and
$$\mathscr{G}^{-1}=\left(
                            \begin{array}{cc}
                              \varphi\varphi_{\co}^2 & \lambda(\kappa\lambda)^{-1} \\
                              (\kappa\lambda)^{-1}\kappa & 0  \\
                            \end{array}
                          \right) : (X, L) \rightarrow (X, K).$$

\end{thm}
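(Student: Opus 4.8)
The plan is to mirror the proof of the preceding (core) bordered theorem, with Theorem~\ref{kernel} replaced by its dual Theorem~\ref{cokernel} and the $\ast$-left condition replaced by a $\ast$-right condition. By Theorem~\ref{cokernel}, $\varphi$ has a dual core inverse if and only if both $\kappa\lambda$ and $\varphi^3\varphi^{\ast}+\lambda\lambda^{\ast}$ are invertible, so the entire task reduces to re-expressing these two invertibility conditions in terms of the $\ast$-right invertibility of $(\varphi,\lambda)$ and the invertibility of $\mathscr{G}$.

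First I would record the factorization
$$\varphi^3\varphi^{\ast}+\lambda\lambda^{\ast} = \bigl(\varphi^2+\lambda(\kappa\lambda)^{-1}\kappa\bigr)\bigl(\varphi\varphi^{\ast}+\lambda\lambda^{\ast}\bigr),$$
which is dual to the one used in Theorem~\ref{kernel} but with the factors in the opposite order. Expanding the right-hand side, the cross terms $\varphi^2\lambda\lambda^{\ast}$ and $\lambda(\kappa\lambda)^{-1}\kappa\varphi\varphi^{\ast}$ vanish because $\varphi\lambda=0$ and $\kappa\varphi=0$, while $\lambda(\kappa\lambda)^{-1}(\kappa\lambda)\lambda^{\ast}=\lambda\lambda^{\ast}$, which confirms the identity. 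Since $\varphi\varphi^{\ast}+\lambda\lambda^{\ast}$ is self-adjoint, the product-with-a-symmetric-factor argument (if $AB$ is invertible and $A=A^{\ast}$, then $A$ has both a left inverse $(B(AB)^{-1})^{\ast}$ and a right inverse $B(AB)^{-1}$, hence is invertible, and then so is $B$) shows that $\varphi^3\varphi^{\ast}+\lambda\lambda^{\ast}$ is invertible if and only if both $\varphi\varphi^{\ast}+\lambda\lambda^{\ast}$ and $\varphi^2+\lambda(\kappa\lambda)^{-1}\kappa$ are invertible.

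Next I would translate each factor into the desired form. By Lemma~\ref{*-inv.}(2), $\varphi\varphi^{\ast}+\lambda\lambda^{\ast}=(\varphi,\lambda)(\varphi,\lambda)^{\ast}$ is invertible exactly when $(\varphi,\lambda)$ is $\ast$-right invertible in $\mathscr{M_C}$. For the remaining conditions, that $\kappa\lambda$ and $\varphi^2+\lambda(\kappa\lambda)^{-1}\kappa$ are invertible, Lemma~\ref{kernel-group inverse} identifies them precisely with $\varphi$ being group invertible, and Lemma~\ref{*-inv. group} identifies group invertibility of $\varphi$ with invertibility of $\mathscr{G}$ (and simultaneously yields that $\kappa\lambda$ is invertible). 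Chaining these equivalences gives the stated criterion: $\varphi$ is dual core invertible if and only if $(\varphi,\lambda)$ is $\ast$-right invertible and $\mathscr{G}$ is invertible.

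Finally, for the explicit inverse I would start from the formula for $\mathscr{G}^{-1}$ in Lemma~\ref{*-inv. group}, whose top-left entry is $\varphi^{\#}$, and only verify that this entry equals $\varphi\varphi_{\co}^2$. Since $\varphi$ is group invertible and the defining dual-core equation gives $\varphi=\varphi^2\varphi_{\co}$, Lemma~\ref{group-inverse} yields $\varphi^{\#}=\varphi\varphi_{\co}^2$ directly, matching the claimed matrix; the other three entries are unchanged. I expect the only genuinely delicate point to be fixing the order of the two factors in the factorization: unlike the self-dual group-inverse situation, here $\varphi^2+\lambda(\kappa\lambda)^{-1}\kappa$ must stand on the left so that the cross terms vanish and the surviving $\lambda\lambda^{\ast}$ term appears as required.
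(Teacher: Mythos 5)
Your proof is correct and takes essentially the paper's intended route: the paper proves the core-inverse bordered theorem in detail and presents this one as its dual, and your argument is precisely that dualization --- the factorization $\varphi^3\varphi^{\ast}+\lambda\lambda^{\ast}=(\varphi^2+\lambda(\kappa\lambda)^{-1}\kappa)(\varphi\varphi^{\ast}+\lambda\lambda^{\ast})$, Lemma~\ref{*-inv.}(2) to convert invertibility of $\varphi\varphi^{\ast}+\lambda\lambda^{\ast}$ into $\ast$-right invertibility of $(\varphi,\lambda)$, Lemmas~\ref{kernel-group inverse} and~\ref{*-inv. group} to convert the remaining conditions into invertibility of $\mathscr{G}$, and Lemma~\ref{group-inverse} (applied with $\varphi=\varphi^{2}\varphi_{\co}$) to identify the corner entry $\varphi^{\#}=\varphi\varphi_{\co}^{2}$. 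The only blemish is cosmetic: your parenthetical symmetric-factor lemma is stated for a symmetric \emph{left} factor ($A=A^{\ast}$), whereas here the symmetric factor $\varphi\varphi^{\ast}+\lambda\lambda^{\ast}$ sits on the \emph{right}; the mirrored argument --- $B$ has left inverse $(AB)^{-1}A$, hence by symmetry of $B$ also a right inverse, so $B$ is invertible and then so is $A$ --- is the one you actually need, and it is equally immediate.
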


\vspace{0.2cm} \noindent {\large\bf Acknowledgements}

This research is supported by the National Natural Science Foundation of China (No.11771076 and No.11471186); 
the Fundamental Research Funds For the Central Universities (No.KYCX17\_0037);
Postgraduate Research \& Practice Innovation Program of Jiangsu Province (No.KYCX17\_0037).


\begin{thebibliography}{99}
\bibitem{PR2} Puystjens R, Robinson DW. The Moore-Penrose inverse of a morphism with factorization. Linear Algebra Appl. 1981;40:129-141.

\bibitem{OM} Baksalary OM, Trenkler G. Core inverse of matrices. Linear Multilinear Algebra. 2010;58:681-697.

\bibitem{DSR} Raki\'{c} DS, Din\v{c}i\'{c} N\v{C}, Djordievi\'{c} DS. Group, Moore-Penrose, core and dual core inverse in rings with involution. Linear Algebra Appl. 2014;463:115-133.

\bibitem{XSZ} Xu SZ, Chen JL, Zhang XX. New characterizations for core and dual core inverses in rings with involution. Front. Math. China. 2017;12(1):231-246.

\bibitem{DW} Robinson DW, Puystjens R. Generalized inverses of morphisms with kernels. Linear Algebra Appl. 1987;96:65-86.

\bibitem{MR} Miao JM, Robinson DW. Group and Moore-Penrose inverse of regular morphisms with kernel and cokernel. Linear Algebra Appl. 1988;110:263-270.

\bibitem{PP} Pe$\check{s}$ka P. The Moore-Penrose inverse of a partitioned morphism in an additive category. Math. Slovaca. 2000;50(4):437-452.

\bibitem{PR1} Puystjens R, Robinson DW. The Moore-Penrose inverse of a morphism in an additive category. Comm. Algebra. 1984;12(3):287-299.

\bibitem{PR3} Puystjens R, Robinson DW. Symmetric morphisms and the existence of Moore-Penrose inverses. Linear Algebra Appl. 1990;131:51-69.

\bibitem{Hartwig} Hartwig RE. Block generalized inverses. Arch. Rational Mech. Anal. 1976;61:197-251.

\bibitem{Li} Li TT, Chen JL. Characterizations of core and dual core inverses in rings with involution. Linear Multilinear Algebra. 2018;66(4):717-730.

\bibitem{Chen} Chen JL, Zhu HH, Patr\'{\i}cio P, Zhang YL. Characterizations and representations of core and dual core inverses. Canad. Math. Bull. 2017;60(2):269-282.

\end{thebibliography}
\end{document}